\title{Equation de Dirac en espace-temps Anti-de Sitter Schwarzschild}
\author{Guillaume \textsc{Idelon-Riton}}
\date{March 2017}
\newtheorem{theo}{Théorème}[section]
\newtheorem{prop}[theo]{Proposition}
\newtheorem{lem}[theo]{Lemme}
\theoremstyle{definition}
\theoremstyle{remark}
\renewcommand\leq{\leqslant}
\renewcommand\geq{\geqslant}
\renewcommand{\Im}{\operatorname{Im}}
\newcommand\N{\mathbb N}
\newcommand\R{\mathbb R}
\newcommand\C{\mathbb C}
\newcommand{\abs}[1]{\left\lvert#1\right\rvert}
\newcommand{\norme}[1]{\left\lVert#1\right\rVert}
\newcommand{\pare}[1]{\left(#1\right)}
\DeclareMathAlphabet{\mathonebb}{U}{bbold}{m}{n}
\numberwithin{equation}{section}
\begin{document}

\title{Explicit formula and meromorphic extension of the resolvent for the massive Dirac operator in the Schwarzschild-Anti-de Sitter spacetime}
\author{Guillaume \textsc{Idelon-Riton}}

\maketitle

\abstract{We study the resolvent of the massive Dirac operator in the Schwarzschild-Anti-de Sitter space-time. After separation of variables, we use standard one dimensional techniques to obtain an explicit formula. We then make use of this formula to extend the resolvent meromorphically accross the real axis.}

\section{Introduction}

The study of the resolvent is an active subject of research thanks to its close link to the dynamics of the fields we wish to look at. More precisely, one way to look at the dynamics of fields is to express the propagator in terms of a contour integral in the complex plane using the resolvent. The poles of this resolvent, also called resonances or quasinormal modes, should then give the rate of decay of the fields and the frequencies at which this decay happens. The study of resonances in mathematical general relativity is then an important tool in order to understand the behaviour of the various fields that can be considered on a spacetime. This study can be traced back to the work of A. Bachelot and A. Motet-Bachelot \cite{BaMBa93} and was pursued by A. S\`{a} Barreto and M. Zworski \cite{SBZ97} for spherically symmetric black holes. Using this result, J-F. Bony and D. H\"{a}fner \cite{BoHa} were able to obtain a result concerning the local energy decay. This was then extended to more general manifolds by R. Melrose, A. S\`{a} Barreto and A. Vasy \cite{MeSBV14}. Resonances have also been employed by S. Dyatlov \cite{Dya11a}, \cite{Dya11b}, \cite{Dya12} to obtain the local energy decay of linear waves in the Kerr-de Sitter family of space-times. This was then extended to space-times close to the Kerr-de Sitter family by A. Vasy \cite{Va13}. These techniques were also adapted by A. Iantchenko to the study of resonances for Dirac fields in the Kerr-Newman-de Sitter spacetime \cite{Ia15b}. This work follows some other works by the same author concerning resonances for the Dirac fields in the de Sitter-Reissner-Nordström black holes where an expansion in terms of resonances was obtained \cite{Ia15}.
\\For Anti-de Sitter black holes, O. Gannot gave a global definition of resonances for Klein-Gordon fields in the Kerr-Anti-de Sitter family of space-times \cite{Gan14b} and localized a resonance exponentially close to the real axis \cite{Gan16} adapting a method of S. Tang and M. Zworski \cite{TaZw98}. A similar result was proven earlier by the same author for the Schwarzschild-Anti-de Sitter spacetime \cite{Gan14}. C. Warnick \cite{War14} also defined resonances for various equations in asymptotically Anti-de Sitter black hole using physical space methods. \\
Concerning the stability problem for black holes in general relativity, a precise study of the resonances was also a key point in the recent proof of the full non-linear stability of the Kerr-de Sitter family of spacetimes by P. Hintz and A. Vasy \cite{HiVa16}. The resolvent was also used by F. Finster and J. Smoller \cite{FinSmo16} to show linear stability of the Kerr family of black holes. By means of the vector field method, M. Dafermos, G. Holzegel and I. Rodnianski obtained the linear stability of the Schwarzschild solution to gravitational perturbations including precise decay estimates \cite{DaHoRo16}. G. Holzegel and J. Smulevici proved the stability of the Schwarzschild-Anti-de Sitter black hole for spherically symmetric perturbations \cite{Holsmu}. Nevertheless, the Kerr-Anti-de Sitter family seems to be unstable for reflecting boundary conditions as indicated by the logarithmic energy decay obtained by G.Holzegel and J. Smulevici in  \cite{HolSmu2} which is optimal (see \cite{HolSmu3}).\\
The Dirac equation in the Anti-de Sitter spacetime was studied by A. Bachelot in \cite{Bachelot}. Whereas the spectrum of the elliptic part is discrete in the Anti-de Sitter case, it becomes continuous when looking at the Schwarzschild-Anti-de Sitter black hole. Asymptotic completeness for the massive Dirac equation on this last spacetime was shown by the author in \cite{IR14}. Quasimodes for this same equation were constructed in \cite{IR16}.
\bigbreak
In this paper, we give an explicit formula for the resolvent of the Dirac operator in the Schwarzschild-Anti-de Sitter spacetime and show that the weighted resolvent extends meromorphically through the real axis, see section \ref{SecMainResult}. Since this space-time is spherically symmetric, after separation of variables, we are let with an operator on the half-line. We are then able to use one dimensional techniques similar to the ones employed by A. Iantchenko and E. Korotyaev for their study of the Dirac operator on the line \cite{IaKo14} and on the half-line \cite{IaKo14bis}.

\begin{center}
\bf{Acknowledgments}
\end{center}

The author acknowledges support from the ANR funding ANR-$12$-BS$01$-$012$-$01$.

\section{The Dirac equation on the Schwarzschild Anti-de Sitter spacetime}

\indent In this section, we present the Schwarzschild Anti-de Sitter space-time and give the coordinate system that we will work with in the rest of the paper. We quickly study the radial null geodesics and then formulate the Dirac equation as a system of partial differential equations.

\subsection{The Schwarzschild Anti-de Sitter space-time}

Let $\Lambda <0$. We define $l^{2}= \frac{-3}{\Lambda}$. We denote by $M$ the black hole mass.

In Boyer-Lindquist coordinates, the Schwarzschild-Anti-de Sitter metric is given by:
\begin{equation}
g_{ab}=\pare{1-\frac{2M}{r}+\frac{r^{2}}{l^{2}}} dt^{2} - \pare{1-\frac{2M}{r}+\frac{r^{2}}{l^{2}}}^{-1} dr^{2}- r^{2} \pare{d\theta^{2} + \sin^{2} \theta d\varphi^{2}}
\end{equation}
We define $F(r)= 1- \frac{2M}{r}+ \frac{r^{2}}{l^{2}}$. We can see that $F$ admits two complex conjugate roots and one real root $r=r_{SAdS}$. We deduce that the singularities of the metric are at $r=0$ and $r=r_{SAdS}=p_{+}+p_{-}$ where $p_{\pm}=\pare{Ml^{2} \pm \pare{M^{2}l^{4} + \frac{l^{6}}{27}}^{\frac{1}{2}}}^{\frac{1}{3}}$ (see \cite{Holsmu}). The exterior of the black hole will be the region $r > r_{SAdS}$ and our spacetime is then seen as $\R_{t} \times ]r_{SAdS},+\infty[ \times S^{2}$. It is well-known that the metric can be extended for $r \leq r_{SAdS}$ by a coordinate change which gives the maximally extended Schwarschild-Anti-de Sitter spacetime. In this paper, we are only interested in the exterior region.

In order to have a better understanding of this geometry, we study the outgoing (respectively ingoing) radial null geodesics (that is to say for which $\frac{dr}{dt}>0$ (respectively $\frac{dr}{dt}<0$)). Using the form of the metric we can see that along such geodesics, we have:
\begin{equation}
\frac{dt}{dr} = \pm F\pare{r}^{-1}.
\end{equation}
We thus introduce a new coordinate $x$ such that $t-x$ (respectively $t+x$) is constant along outgoing (respectively ingoing) radial null geodesics. In other words:
\begin{equation}
\frac{\mathrm dx}{\mathrm dr}= F(r)^{-1}.
\end{equation}
The coordinate system $\pare{t,x,\theta,\varphi}$ is called Regge-Wheeler coordinate system. We have:
\begin{flalign}
\hphantom{A}& \lim_{r \to r_{SAdS}} x\pare{r}=- \infty \\
\hphantom{A}& \lim_{r \to \infty} x\pare{r}= 0.
\end{flalign}
This limit proves that, along radial null geodesic, a particle goes to timelike infinity in finite Boyer-Lindquist time (recall that along these geodesic, $t-x$ and $t+x$ are constants). As a consequence, we have to put boundary conditions at $x=0$ for massless fields. For massive fields, there appears in addition a confining potential at $x=0$. For these fields there is a competition between this confining potential and the null geodesics going very fast to $x=0$. There appears a bound on the mass (related to the Breitenlohner-Freedman bound). For masses smaller than this bound, a boundary condition has to be added. For masses larger than this bound, no boundary condition is needed.

\subsection{The Dirac equation}

Using the 4-component spinor $
\psi=\begin{pmatrix}
\psi_{1}\\
\psi_{2} \\
\psi_{3} \\
\psi_{4}
\end{pmatrix}$, the Dirac equation in the Schwarzschild-Anti-de Sitter spacetime takes the form:
\begin{equation} \label{opé1}
\left( \partial_{t} + \gamma^{0}\gamma^{1} \pare{F(r)\partial_{r} + \frac{F\pare{r}}{r} + \frac{F'\pare{r}}{4}} +\frac{F(r)^{\frac{1}{2}}}{r} \cancel{D}_{\mathbb{S}^{2}} + im \gamma^{0} F(r)^{\frac{1}{2}}\right) \psi = 0. 
\end{equation}
where $m$ is the mass of the field and $\cancel{D}_{\mathbb{S}^{2}}$ is the Dirac operator on the sphere. In the coordinate system given by $\pare{\theta,\varphi}\in [0;2\pi]\times [0;\pi]$, we obtain: $\cancel{D}_{\mathbb{S}^{2}}= \gamma^{0} \gamma^{2} \pare{ \partial_{\theta} + \frac{1}{2} \cot \theta} + \gamma^{0} \gamma^{3} \frac{1}{\sin \theta} \partial_{\varphi}$. We will now work with these coordinates. For more details about how to obtain this form of the equation, we refer to a previous work \cite{IR14}.\\
Recall that the Dirac matrices $\gamma^{\mu}$, $0\leq \mu \leq 3$, unique up to unitary transform, are given by the following relations:
\begin{equation}
\gamma^{0^{*}} = \gamma^{0}; \hspace{3mm} \gamma^{j^{*}} = -\gamma^{j},\hspace{3mm} 1\leq j \leq 3;\hspace{3mm} \gamma^{\mu} \gamma^{\nu} + \gamma^{\nu} \gamma^{\mu} = 2 g^{\mu \nu}_{Mink}\mathbf{1},\hspace{3mm} 0\leq \mu, \nu \leq 3
\end{equation}
where $g^{\mu \nu}_{Mink}$ is the Minkowski metric. In our representation, the matrices take the form:
\begin{equation} \label{MatDir}
\gamma^{0} = i\begin{pmatrix}
0 & \sigma^{0} \\
-\sigma^{0} & 0
\end{pmatrix}, \hspace{2mm} 
\gamma^{k} = i \begin{pmatrix}
0 & \sigma^{k} \\
\sigma^{k} & 0
\end{pmatrix}, \hspace{2mm} k=1,2,3
\end{equation}
where the Pauli matrices are given by:
\begin{equation}
\sigma^{0}=\begin{pmatrix}
1&0 \\
0 & 1
\end{pmatrix}, \hspace{1mm} 
\sigma^{1}= \begin{pmatrix}
1 & 0 \\
0 & -1
\end{pmatrix}, \hspace{1mm}
\sigma^{2}= \begin{pmatrix}
0 & 1 \\
1 & 0
\end{pmatrix}, \hspace{1mm}
\sigma^{3}= \begin{pmatrix}
0 & -i \\
i & 0
\end{pmatrix}.
\end{equation}
We thus obtain:
\begin{equation}
\gamma^{0}\gamma^{1} = \begin{pmatrix} 
-\sigma^{1}& 0 \\
0& \sigma^{1} \\
\end{pmatrix}; \hspace{2mm} \gamma^{0}\gamma^{2} = \begin{pmatrix}
-\sigma^{2}& 0 \\
0 &\sigma^{2}
\end{pmatrix};\hspace{2mm} \gamma^{0} \gamma^{3} = \begin{pmatrix}
-\sigma^{3}& 0 \\
0& \sigma^{3}
\end{pmatrix}.\label{refGamma0,1,2,3}
\end{equation}
We make the change of spinor $\phi(t,x,\theta,\varphi) = r F(r)^{\frac{1}{4}} \psi (t,r,\theta,\varphi)$ and obtain the following equation:
\begin{equation}
\partial_{t} \phi = i \left(i\gamma^{0}\gamma^{1} \partial_{x}  + i\frac{F(r)^{\frac{1}{2}}}{r} \cancel{D}_{\mathbb{S}^{2}} -m \gamma^{0} F(r)^{\frac{1}{2}} \right) \phi.
\end{equation}
We set:
\begin{equation}
H_{m} = i\gamma^{0}\gamma^{1} \partial_{x}  + i\frac{F(r)^{\frac{1}{2}}}{r} \cancel{D}_{\mathbb{S}^{2}} -m \gamma^{0} F(r)^{\frac{1}{2}}.
\end{equation}
We introduce the Hilbert space:
\begin{equation}
\mathcal{H} := \left [L^{2}\pare{\left]-\infty,0\right[_{x} \times S^{2}_{\omega}, dx d\omega} \right ]^{4}
\end{equation}
Recall that $r$ is now a function of $x$. Using spinoidal spherical harmonics (see \cite{Bachelot} for details), we are able to diagonalize the Dirac operator on the sphere and we obtain the following operator:
\begin{equation*}
H_{m}^{s,n} = i\gamma^{0}\gamma^{1} \partial_{x}  + i\frac{F(r)^{\frac{1}{2}}}{r} \gamma^{0}\gamma^{2}\pare{s+\frac{1}{2}} -m \gamma^{0} F(r)^{\frac{1}{2}}.
\end{equation*}
In the sequel, we will write $A\pare{x} = \frac{F(r\pare{x})^{\frac{1}{2}}}{r\pare{x}}$ and $B\pare{x} = F(r\pare{x})^{\frac{1}{2}}$. The behavior of these potentials is given by:
\begin{enumerate}
\item[] \begin{flalign*}
\hphantom{A} & A\pare{x}= \begin{cases}
\frac{1}{l} + x^{2} + o\pare{x^{2}}, \hspace{2mm} x \sim 0,\\
C_{A}e^{\kappa x} + o\pare{e^{\kappa x}}, \hspace{2mm} x \sim -\infty,
\end{cases}
\end{flalign*}
 \item[] \begin{flalign*}
\hphantom{A} & B\pare{x}  = \begin{cases}
-\frac{l}{x} + x + o\pare{x}, \hspace{2mm} x \sim 0,\\
C_{B}e^{\kappa x} + o\pare{e^{\kappa x}}, \hspace{2mm} x \sim -\infty,
\end{cases}
\end{flalign*}
\end{enumerate}
where $ \kappa$ is the surface gravity, $C_{A}$ and $C_{B}$ are two positive constants. The corresponding Hilbert space is now:
\begin{equation}
\mathcal{H}_{s,n} := \left [L^{2}\pare{\left]-\infty,0\right[_{x}} \right]^{4}\otimes Y_{s,n}
\end{equation}
where $Y_{s,n}$ span the corresponding spinoidal spherical harmonic for harmonics $s$ and $n$ fixed.\\
It was proven in \cite{IR14}, that this operator is self-adjoint for all positive masses when equipped with the appropriate domain.

\section{Main result} 

\label{SecMainResult}

Let $\psi$ a solution of 
\begin{equation*}
H_{m}^{s,n} \psi = \lambda \psi
\end{equation*}
such that:
\begin{equation*}
\psi \pare{x} = \begin{pmatrix}
0 \\
e^{-i\lambda x} \\
0 \\
0
\end{pmatrix}+ \chi\pare{x}
\end{equation*}
where $\norme{\chi\pare{x}} = o\pare{
e^{\Im\pare{\lambda} x}}$ as $x$ goes to $-\infty$. We call $\psi$ a Jost solution. Let $\varphi$ a solution of the same equation satisfying the boundary conditions:
\begin{equation*}
\norme{\pare{\gamma^{1}+i}\varphi} = o\pare{\sqrt{\pare{-x}}}.
\end{equation*}
These solutions are constructed in \ref{JostSol} and \ref{BoundSol}. We introduce $\tilde{\psi} = \pare{-i} \gamma^{0} \gamma^{1} \gamma^{2} \psi$ and $\tilde{\varphi}  = \pare{-i} \gamma^{0} \gamma^{1} \gamma^{2} \varphi$ where $\gamma^{0}$, $\gamma^{1}$, $\gamma^{2}$ are the Dirac matrices \eqref{MatDir}.

\begin{theo} \label{MainTheo}
\begin{enumerate}
\item[i)]
Consider the function defined by:
\begin{flalign*}
R_{m}^{s,n}\pare{x,y,\lambda} & = \pare{\varphi\pare{x} \psi^{t}\pare{y} + \tilde{\varphi}\pare{x} \tilde{\psi}^{t}\pare{y}} M_{\alpha,\beta}^{-1} i\Gamma^{1} \mathds{1}_{]-\infty,x[}\pare{y} \\
& \quad + \pare{\psi\pare{x}\varphi^{t}\pare{y} + \tilde{\psi}\pare{x} \tilde{\varphi}^{t}\pare{y}}M_{\alpha,\beta}^{-1} i\Gamma^{1}\mathds{1}_{]x,0[}\pare{y}.
\end{flalign*}
where $\alpha = \varphi_{1} \psi_{2} - \psi_{1} \varphi_{2} + \varphi_{3} \psi_{4} - \psi_{3} \varphi_{4}$, $\beta = \varphi_{1} \psi_{3} - \psi_{1} \varphi_{3} + \varphi_{2} \psi_{4} - \psi_{2} \varphi_{4}$ and:
\begin{equation*}
M_{\alpha,\beta} =\begin{pmatrix}
0 & \alpha & \beta & 0 \\
-\alpha & 0 & 0 & \beta \\
-\beta & 0 & 0 & \alpha \\
0 &  -\beta & -\alpha & 0 
\end{pmatrix}. 
\end{equation*}
Here, $\varphi_{i}$ and $\psi_{i}$ are the components of $\varphi$ and $\psi$. Let:
\begin{equation*}
R_{m}^{s,n}\pare{\lambda}f \pare{x} = \int_{-\infty}^{0} R_{m}^{s,n}\pare{x,y,\lambda}f\pare{y} dy.
\end{equation*}
Then, for all $\lambda \in \C$ such that $\Im\pare{\lambda} >0$, we have:
\begin{equation*}
\pare{H_{m}^{s,n} - \lambda}^{-1} = R_{m}^{s,n}\pare{\lambda}.
\end{equation*}
\item[ii)] Now, let $f_{\epsilon}\pare{x} = e^{\epsilon x}$. Then the operator $f_{\epsilon} \pare{H_{m}^{s,n} - \lambda}^{-1} f_{\epsilon}$ defined for $\verb?Im? \pare{\lambda} >0$ extend meromorphically to $\{\lambda \in \C \hspace{1mm} \vert \hspace{1mm} \verb?Im? \pare{\lambda} > - \epsilon \}$ for all $0< \epsilon < \frac{\kappa}{2}$ where $\kappa$ is the surface gravity. The poles of this meromorphic extension are called resonances.
\end{enumerate}
\end{theo}

\section{Jost solutions} 

\label{JostSol}

In this section, we are interested in the construction of the Jost solution presented in the last section. We have:
\begin{prop}\label{propJost}
For all $\lambda \in \C$ such that $\Im\pare{\lambda}>-\frac{\kappa}{2}$, there exist solutions $\varphi_{2},\varphi_{3}$ to the equation:
\begin{equation*}
H_{m}^{s,n} \varphi = \lambda \varphi
\end{equation*}
such that:
\begin{equation*}
\varphi_{2} \pare{x} = \begin{pmatrix}
0 \\
e^{-i\lambda x} \\
0 \\
0
\end{pmatrix}+ \phi_{2}\pare{x};
\hspace{5mm} \varphi_{3} = \begin{pmatrix}
0 \\
0 \\
e^{-i \lambda x} \\
0
\end{pmatrix} + \phi_{3}\pare{x} 
\end{equation*}
with $\norme{\phi_{2}\pare{x}} = o\pare{e^{\Im\pare{\lambda}x}}$ and $\norme{\phi_{3}\pare{x}} = o\pare{e^{\Im\pare{\lambda}x}}$ as $x$ goes to $-\infty$. Moreover, we have:
\begin{flalign*}
\norme{\varphi_{2}\pare{x}} & \leq e^{\Im\pare{\lambda}x} e^{\int_{-\infty}^{x} e^{\max\pare{0,2\Im\pare{\lambda}t}} \norme{V_{m}\pare{t}} dt}, \\
\norme{\varphi_{3}\pare{x}} &\leq e^{\Im\pare{\lambda}x} e^{\int_{-\infty}^{x}e^{\max\pare{0,2\Im\pare{\lambda}t}} \norme{V_{m}\pare{t}} dt}
\end{flalign*}
for $\Im\pare{\lambda} > - \frac{\kappa}{2}$
\end{prop}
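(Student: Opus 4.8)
\noindent\emph{Proof strategy.} The plan is to recast the eigenvalue equation as a first order linear system, factor out the free exponential $e^{-i\lambda x}$ so that the Jost solution becomes a bounded fixed point of a Volterra operator, and then run a Picard iteration whose convergence is exactly what forces the threshold $\Im(\lambda)>-\frac{\kappa}{2}$. Concretely, since $\gamma^{0}\gamma^{1}=\mathrm{diag}(-1,1,1,-1)$ by \eqref{refGamma0,1,2,3} (in particular $(\gamma^{0}\gamma^{1})^{2}=\mathrm{Id}$), multiplying $H_{m}^{s,n}\varphi=\lambda\varphi$ on the left by $-i\gamma^{0}\gamma^{1}$ yields
\begin{equation*}
\varphi'(x) = -i\lambda\,\gamma^{0}\gamma^{1}\,\varphi(x) + V_{m}(x)\,\varphi(x), \qquad V_{m}(x) = -A(x)\harm\,\gamma^{0}\gamma^{1}\gamma^{0}\gamma^{2} - im B(x)\,\gamma^{0}\gamma^{1}\gamma^{0}.
\end{equation*}
From the stated asymptotics of $A$ and $B$, the matrix $V_{m}$ is continuous on $]-\infty,0[$ with $\norme{V_{m}(x)}\leq\abs{s+\tfrac12}A(x)+mB(x)=O\pare{e^{\kappa x}}$ as $x\to-\infty$ (near $x=0$ there is only an integrable $\abs{x}^{-1}$ growth from $B$, which is irrelevant here). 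Setting $w(x):=e^{i\lambda x}\varphi(x)$ turns the system into $w'=i\lambda\pare{\mathrm{Id}-\gamma^{0}\gamma^{1}}w+V_{m}w$; since $\mathrm{Id}-\gamma^{0}\gamma^{1}=\mathrm{diag}(2,0,0,2)$, the propagator of the associated free system is $P(x,t)=\mathrm{diag}\pare{e^{2i\lambda(x-t)},1,1,e^{2i\lambda(x-t)}}$, and the constant vectors $w_{0}=(0,1,0,0)^{t}$ and $w_{0}=(0,0,1,0)^{t}$ solve the free equation; these are the images under $w=e^{i\lambda x}\varphi$ of the prescribed leading terms of $\varphi_{2}$ and $\varphi_{3}$.

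Fixing such a $w_{0}$, I would then look for a solution of the Volterra equation
\begin{equation*}
w(x) = w_{0} + \int_{-\infty}^{x} P(x,t)\,V_{m}(t)\,w(t)\,\mathrm dt
\end{equation*}
on $]-\infty,x_{0}[$ for an arbitrary $x_{0}<0$. Everything hinges on the uniform kernel bound $\norme{P(x,t)}\leq e^{\max(0,\,2\Im(\lambda)t)}$ valid for all $t\leq x\leq 0$: when $\Im(\lambda)\geq 0$ both exponentials in $P$ have modulus $\leq 1$ because $x-t\geq 0$; when $\Im(\lambda)<0$ one writes $\abs{e^{2i\lambda(x-t)}}=e^{-2\Im(\lambda)x}e^{2\Im(\lambda)t}\leq e^{2\Im(\lambda)t}$, the inequality holding precisely because $-2\Im(\lambda)x\leq 0$ for $x\leq 0$. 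Multiplying by $\norme{V_{m}(t)}=O\pare{e^{\kappa t}}$, the kernel $t\mapsto e^{\max(0,2\Im(\lambda)t)}\norme{V_{m}(t)}$ decays like $e^{\pare{\kappa+2\min(0,\Im(\lambda))}t}$ at $-\infty$, hence is integrable on $]-\infty,x_{0}[$ if and only if $\Im(\lambda)>-\frac{\kappa}{2}$. This is where the hypothesis enters, and it is the only genuinely delicate point of the argument — the rest is the standard Volterra machinery.

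Granting the kernel bound, the successive approximations $w^{(0)}:=w_{0}$, $w^{(n+1)}(x):=w_{0}+\int_{-\infty}^{x}P(x,t)V_{m}(t)w^{(n)}(t)\,\mathrm dt$ satisfy, by induction on $n$,
\begin{equation*}
\norme{w^{(n)}(x) - w^{(n-1)}(x)} \leq \frac{1}{n!}\pare{\int_{-\infty}^{x} e^{\max(0,2\Im(\lambda)t)}\norme{V_{m}(t)}\,\mathrm dt}^{\!n}, \qquad n\geq 1,
\end{equation*}
so the series converges uniformly on compact subsets of $]-\infty,x_{0}[$ to a $C^{1}$ solution $w$ of the integral equation, hence of the ODE; then $\varphi:=e^{-i\lambda x}w$ solves $H_{m}^{s,n}\varphi=\lambda\varphi$. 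Summing the series gives $\norme{w(x)}\leq\exp\pare{\int_{-\infty}^{x}e^{\max(0,2\Im(\lambda)t)}\norme{V_{m}(t)}\,\mathrm dt}$, which is exactly the announced estimate once one recalls $\norme{\varphi(x)}=e^{\Im(\lambda)x}\norme{w(x)}$. Finally, from the Volterra equation and the boundedness of $w$, the tail $\norme{w(x)-w_{0}}\leq\pare{\sup\norme{w}}\int_{-\infty}^{x}e^{\max(0,2\Im(\lambda)t)}\norme{V_{m}(t)}\,\mathrm dt\to 0$ as $x\to-\infty$, so $\norme{\varphi(x)-\varphi_{0}(x)}=e^{\Im(\lambda)x}\,o(1)=o\pare{e^{\Im(\lambda)x}}$. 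Choosing $w_{0}=(0,1,0,0)^{t}$ and $w_{0}=(0,0,1,0)^{t}$ produces $\varphi_{2}$ and $\varphi_{3}$ respectively; the two cases are identical, since the second and third components both lie in the eigenspace of $\gamma^{0}\gamma^{1}$ for the eigenvalue $+1$, so that the relevant entries of $P$ are the same.
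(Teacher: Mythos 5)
Your proof is correct and follows essentially the same route as the paper: a Volterra integral equation anchored at $-\infty$, solved by Picard iteration with the standard $\frac{1}{n!}$ bounds, the hypothesis $\Im\pare{\lambda}>-\frac{\kappa}{2}$ entering exactly where you say, through the integrability of $e^{\max\pare{0,2\Im\pare{\lambda}t}}\norme{V_{m}\pare{t}}$ at $-\infty$. The only difference is presentational: your gauge change $w=e^{i\lambda x}\varphi$ and the single kernel bound $\norme{P\pare{x,t}}\leq e^{\max\pare{0,2\Im\pare{\lambda}t}}$ merge into one computation the two cases $\Im\pare{\lambda}\geq 0$ and $-\frac{\kappa}{2}<\Im\pare{\lambda}<0$ that the paper treats separately, and deliver the stated unified estimate directly.
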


\begin{proof}
We prove the proposition for $\varphi_{2}$, the case of $\varphi_{3}$ can be treated in the same way. We write:
\begin{equation*}
V_{m} \pare{x} = H_{m}^{s,n} - H_{c}=\pare{s+\frac{1}{2}}\gamma^{0}\gamma^{2} A\pare{x} - m \gamma^{0} B\pare{x}.
\end{equation*}
The equation can be put under the form:
\begin{equation*}
\partial_{x} \varphi = i \lambda \Gamma^{1} \varphi \pare{x} - i \Gamma^{1} V_{m}\pare{x} \varphi\pare{x}.
\end{equation*}
We introduce the fundamental matrix of solutions:
\begin{equation*}
\mathcal{M}_{c}\pare{x} =\text{diag}\pare{e^{i\lambda x}, e^{-i\lambda x}, e^{-i \lambda x}, e^{i \lambda x}},
\end{equation*}
which satisfies:
\begin{equation*}
\partial_{x} \pare{\mathcal{M}_{c}}\pare{x} = i\lambda \Gamma^{1} \mathcal{M}_{c}\pare{x},
\end{equation*}
and the relations
\begin{equation*}
\mathcal{M}_{c}\pare{x}\mathcal{M}_{c}\pare{t}  = \mathcal{M}_{c}\pare{x+t}, \hspace{5mm}
\mathcal{M}_{c}\pare{-x}  = \mathcal{M}_{c}\pare{x}^{-1}, \hspace{5mm}
\Gamma^{1} \mathcal{M}_{c}\pare{x}  = \mathcal{M}_{c}\pare{x} \Gamma^{1}.
\end{equation*}
Using these relations, we obtain:
\begin{equation*}
\partial_{x} \pare{\mathcal{M}_{c} \pare{-x} \varphi\pare{x} }= -i \mathcal{M}_{c}\pare{-x} \Gamma^{1} V_{m}\pare{x} \varphi\pare{x}.
\end{equation*}
We consider the associated integral equation:
\begin{equation*}
\mathcal{M}_{c}\pare{-x} \varphi_{2}\pare{x} = \begin{pmatrix}
0 \\
1 \\
0 \\
0
\end{pmatrix}
- i \int_{-\infty}^{x} \mathcal{M}_{c} \pare{-t} \Gamma^{1} V_{m}\pare{t} \varphi_{2} \pare{t} dt
\end{equation*}
which gives:
\begin{equation*}
\varphi_{2}\pare{x} = \begin{pmatrix}
0 \\
e^{-i\lambda x} \\
0 \\
0
\end{pmatrix}
- i \int_{-\infty}^{x} \mathcal{M}_{c} \pare{x-t} \Gamma^{1} V_{m}\pare{t} \varphi_{2}\pare{t} dt.
\end{equation*}
We look for a solution expressed as a series:
\begin{equation*}
\varphi_{2}\pare{x} = \underset{n\geq 0}{\sum} \varphi_{2,n}\pare{x}.
\end{equation*}
We obtain the following equations:
\begin{flalign*}
\varphi_{2,0} \pare{x} & = \begin{pmatrix}
0 \\
e^{-i\lambda x} \\
0 \\
0 
\end{pmatrix} \\
\varphi_{2,n+1}\pare{x} & = -i \int_{-\infty}^{x} \mathcal{M}_{c}\pare{x-t} \Gamma^{1} V_{m}\pare{t} \varphi_{2,n}\pare{t} dt.
\end{flalign*}
Since $V_{m}$ is behaving like $e^{\kappa x}$ at $-\infty$ and $\Im\pare{\lambda} > - \frac{\kappa}{2}$, these integrals are well defined. Moreover, we have:
\begin{equation*}
\norme{\mathcal{M}_{c}\pare{x-t}} \leq e^{\abs{\Im\pare{\lambda}} \pare{x-t}}
\end{equation*}
where the norm is the supremum of the modulus of the coefficients and $t \leq x$. We now investigate the two cases $\Im\pare{\lambda} \geq 0$ and $0>\Im\pare{\lambda}>-\frac{\kappa}{2}$.
\begin{enumerate}
\item[1)] We suppose here that $\Im\pare{\lambda} \geq 0$. We will show, by induction, that $\norme{\varphi_{2,n}\pare{x}} \leq e^{\Im\pare{\lambda}x}\frac{1}{n!}\pare{\int_{-\infty}^{x}\norme{V_{m}\pare{t}}dt}^{n}$ for all $n\in \N$.\\
Indeed, this is true for $\varphi_{2,0}$.\\
We then suppose that $\norme{\varphi_{2,n}\pare{x}} \leq e^{\Im\pare{\lambda}x} \frac{1}{n!}\pare{\int_{-\infty}^{x}\norme{V_{m}\pare{t}}dt}^{n}$ for some $n \in \N$. Then:
\begin{flalign*}
\norme{\varphi_{2,n+1}\pare{x}} 
& \leq \int_{-\infty}^{x} e^{\Im\pare{\lambda}\pare{x-t}}\norme{V_{m}\pare{t}} e^{\Im\pare{\lambda}t}\frac{1}{n!} \pare{\int_{-\infty}^{t} \norme{V_{m}\pare{t'}} dt'}^{n} dt \\
& = e^{\Im\pare{\lambda}x}\int_{-\infty}^{x} \frac{1}{\pare{n+1}!} \frac{\partial}{\partial t}\pare{\int_{-\infty}^{t} \norme{V_{m}\pare{t'}}dt'}^{n+1} dt \\
& = e^{\Im\pare{\lambda}x}\frac{1}{\pare{n+1}!} \pare{\int_{-\infty}^{x} \norme{V_{m}\pare{t}} dt}^{n+1}.
\end{flalign*}
The serie $\varphi_{2}\pare{x} = \underset{n\geq 0}{\sum} \varphi_{2,n}\pare{x}$ is then converging uniformly on every compact set and gives a solution to our equation. Furthermore, we obtain the estimate:
\begin{equation*}
\norme{\varphi_{2}\pare{x}} \leq e^{\Im\pare{\lambda}x} e^{\int_{-\infty}^{x} \norme{V_{m}\pare{t}} dt}.
\end{equation*}
\item[2)] We now suppose that $0> \Im\pare{\lambda} > - \frac{\kappa}{2}$. In this case, we show that $\norme{\varphi_{2,n}\pare{x}} \leq e^{-\Im\pare{\lambda}x}\frac{1}{n!}\pare{\int_{-\infty}^{x}e^{2\Im\pare{\lambda}t}\norme{V_{m}\pare{t}}dt}^{n}$ for all $n >0$. Indeed, we have:
\begin{equation*}
\varphi_{2,1}\pare{x} =  -i \int_{-\infty}^{x} \mathcal{M}_{c}\pare{x-t} \Gamma^{1} V_{m}\pare{t} \varphi_{2,0}\pare{t} dt,
\end{equation*}
and $\norme{\mathcal{M}_{c}\pare{x-t}} \leq e^{\Im\pare{\lambda}\pare{t-x}}$ so that:
\begin{equation*}
\norme{\varphi_{2,1}} \leq e^{-\Im\pare{\lambda}x} \int_{-\infty}^{x} e^{2\Im\pare{\lambda}t} \norme{V_{m}\pare{t}} dt.
\end{equation*}
This last integral is well-defined using that $V_{m}$ is decaying like $e^{\kappa x}$ at $-\infty$. Supposing that we have $\norme{\varphi_{2,n}\pare{x}} \leq e^{-\Im\pare{\lambda}x}\frac{1}{n!}\pare{\int_{-\infty}^{x}e^{2\Im\pare{\lambda}t}\norme{V_{m}\pare{t}}dt}^{n}$ for some $n>0$, a similar argument as for $\Im\pare{\lambda} \geq 0$ gives that:
\begin{equation*}
\norme{\varphi_{2,n+1}\pare{x}} \leq e^{-\Im\pare{\lambda}x} \frac{1}{\pare{n+1}!} \pare{\int_{-\infty}^{x} e^{2\Im\pare{\lambda}t}\norme{V_{m}\pare{t}} dt}^{n+1}.
\end{equation*}
This proves the convergence of the serie and the estimate we wanted.
\end{enumerate}
In any case, since every $\varphi_{2,n}$ is analytic (because of the exponential term in $\lambda$) and the sum is uniformly convergent on every compact set, we conclude that the solution obtained is also analytic for $\Im\pare{\lambda} > - \frac{\kappa}{2}$.
\end{proof}

\section{Solutions satisfying boundary conditions}

\label{BoundSol}

In this section, we are interested in finding solutions to the equation:
\begin{equation*}
H_{m}^{s,n} \varphi = \lambda \varphi
\end{equation*}
satisfying the boundary conditions:
\begin{equation*}
\norme{\pare{\gamma^{1}+i}\varphi} = o\pare{\sqrt{\pare{-x}}}.
\end{equation*}
Recall, from \cite{IR14}, that, for $2ml<1$, $H_{m}^{s,n}$ is self-adjoint with domain:
\begin{equation*}
D\pare{H_{m}^{s,n}} = \{\varphi \in \mathcal{H} \lvert H_{m}^{s,n}\varphi \in \mathcal{H}, \hspace{2mm} \norme{\pare{\gamma^{1}+i}\varphi} = o\pare{\sqrt{\pare{-x}}}\},
\end{equation*}
where $\mathcal{H} = \left [L^{2}\pare{]-\infty,0[} \right ]^{4}$. These boundary conditions can be rewritten as:
\begin{equation*}
\abs{\pare{\varphi_{1} + \varphi_{3}}\pare{x}} = o\pare{\sqrt{\pare{-x}}}, \hspace{5mm}
 \abs{\pare{\varphi_{2} - \varphi_{4}}\pare{x}} = o\pare{\sqrt{\pare{-x}}}.
\end{equation*}
We will prove the:
\begin{prop}\label{BCS1}
We suppose that $2ml<1$ and that $\lambda \in \C$ satisfies $\Im\pare{\lambda}>- \frac{\kappa}{2}$. We can find a solution $\varphi$, analytic for $\Im\pare{\lambda}>- \frac{\kappa}{2}$, to the equation:
\begin{equation*}
H_{m}^{s,n}\varphi =  \lambda \varphi
\end{equation*}
such that $\norme{\pare{\gamma^{1}+i}\varphi\pare{x}}= o\pare{\sqrt{\pare{-x}}}$. Moreover, we have the following estimate:
\begin{equation*}
\norme{\varphi\pare{x}}\leq 4 N \pare{-x}^{-ml} e^{-\frac{6 C_{\lambda,m} \pare{s+\frac{1}{2}}}{1-2ml} x}
\end{equation*}
for all $x\in ]-\infty,0[$, with $N = 2 \max \pare{{\abs{c},\abs{d}}}$, where $c = \underset{x\to 0}{\lim} \frac{1}{2} \pare{-x}^{ml}\pare{\varphi_{1}-\varphi_{3}}$,\\
$d = \underset{x\to 0}{\lim} \frac{1}{2} \pare{-x}^{ml}\pare{\varphi_{2}+\varphi_{4}}$, and $C_{\lambda,m} = \underset{x\in ]-\infty,0[}{\max} \pare{\abs{\lambda},\abs{m\pare{B\pare{x}+\frac{l}{x}}},\abs{A\pare{x}}}$. When $x \to -\infty$, we have the estimate:
\begin{equation*}
\norme{\varphi \pare{x}} \leq C e^{-\abs{\Im\pare{\lambda}}x}.
\end{equation*}
\end{prop}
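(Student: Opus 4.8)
The plan is to mimic the construction of the Jost solutions from Proposition \ref{propJost}, but working near the regular singular point $x=0$ (equivalently $r\to+\infty$) rather than near $x=-\infty$. The first step is to write the eigenvalue equation $H_m^{s,n}\varphi=\lambda\varphi$ in the first-order form $\partial_x\varphi = i\lambda\Gamma^1\varphi - i\Gamma^1 V_m(x)\varphi$, but now to isolate the \emph{singular} part of the potential: near $x=0$ one has $B(x)\sim -\frac{l}{x}$, so the term $-m\gamma^0 B(x)$ contributes a $\frac{1}{x}$-singularity, while $A(x)=\frac{F^{1/2}}{r}$ stays bounded ($A(x)\to \frac1l$). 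Thus I would split $V_m = -m\gamma^0\bigl(-\frac lx\bigr) + W_m(x)$ where $W_m(x)= (s+\tfrac12)\gamma^0\gamma^2 A(x) - m\gamma^0\bigl(B(x)+\frac lx\bigr)$ is bounded on $]-\infty,0[$ (indeed $m(B(x)+l/x)=O(x)$ near $0$ and decays like $e^{\kappa x}$ at $-\infty$).

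The second step is to solve explicitly the model equation $\partial_x\varphi = \frac{iml}{x}\gamma^0\Gamma^1\varphi$ (the leading singular part, dropping the bounded $i\lambda\Gamma^1$ and $-i\Gamma^1 W_m$). Because $\gamma^0\Gamma^1 = i\gamma^0\gamma^0\gamma^1 = i\gamma^1$ acting appropriately — more precisely, after conjugating by the matrix that diagonalizes $\gamma^1$ (the combinations $\varphi_1\pm\varphi_3$ and $\varphi_2\pm\varphi_4$ that already appear in the rewriting of the boundary condition), this model system decouples into scalar equations of Euler type $y' = \pm\frac{ml}{x}y$, whose solutions are $(-x)^{\mp ml}$. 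The boundary condition $\norme{(\gamma^1+i)\varphi}=o(\sqrt{-x})$, i.e.\ $|\varphi_1+\varphi_3|=o(\sqrt{-x})$ and $|\varphi_2-\varphi_4|=o(\sqrt{-x})$, selects, when $2ml<1$, the \emph{decaying} branch: we must kill the $(-x)^{-ml}$ growth in the ``$+$'' channels and keep $(-x)^{+ml}$ there, while in the ``$-$'' channels $(-x)^{-ml}=o(\sqrt{-x})$ is automatically admissible. This is exactly where the hypothesis $2ml<1$ enters, and it fixes the two free constants $c,d$ appearing in the statement.

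The third step is the perturbative construction: treat $i\lambda\Gamma^1\varphi - i\Gamma^1 W_m(x)\varphi$ as a perturbation of the model and set up the Volterra integral equation based at $x=0^-$,
\begin{equation*}
\varphi(x) = \varphi_{\mathrm{mod}}(x) - i\int_0^{x} \mathcal{K}(x,t)\,\bigl(\lambda - W_m(t)\bigr)\varphi(t)\,dt,
\end{equation*}
where $\mathcal{K}(x,t)$ is built from the fundamental matrix of the model equation (so its singular factors are ratios $((-x)/(-t))^{\pm ml}$, bounded for $t$ between $x$ and $0$ once one is careful about which channel). Iterating and estimating as in the proof of Proposition \ref{propJost}, with $C_{\lambda,m}=\max_{x}\bigl(|\lambda|,|m(B(x)+l/x)|,|A(x)|\bigr)$ controlling the perturbation and the factor $\frac{6(s+1/2)}{1-2ml}$ coming from bounding the kernel's $(-x)^{\pm ml}$ factors against $\sqrt{-x}$-type weights, one gets a uniformly (on compacts in $\lambda$, for $\Im\lambda>-\kappa/2$) convergent series, hence analyticity in $\lambda$ and the bound $\norme{\varphi(x)}\le 4N(-x)^{-ml}e^{-\frac{6C_{\lambda,m}(s+1/2)}{1-2ml}x}$ with $N=2\max(|c|,|d|)$. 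Finally, for the behaviour as $x\to-\infty$: on the region $x\le -1$ the potential $W_m$ (and $B$) decays exponentially, so the equation is a bounded exponentially-small perturbation of $\partial_x\varphi=i\lambda\Gamma^1\varphi$; a Gronwall estimate on $[x,-1]$ starting from the (finite) value $\varphi(-1)$ gives $\norme{\varphi(x)}\le Ce^{|\Im\lambda|(-x)}=Ce^{-|\Im\lambda|x}$.

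The main obstacle I expect is the bookkeeping at the regular singular point: one must split into the four scalar channels $\varphi_1\pm\varphi_3$, $\varphi_2\pm\varphi_4$, check that in each channel the relevant fundamental-solution factor $((-x)/(-t))^{\pm ml}$ is bounded for $0>t>x$ (it is, since $2ml<1$ keeps the exponents in $(-\tfrac12,\tfrac12)$) so that the Volterra iteration actually converges, and verify that the perturbation $W_m$ — in particular the error $m(B(x)+l/x)=O(x)$ near $0$ — does not destroy the leading asymptotics nor the boundary condition; getting the explicit constants $N$ and $\frac{6C_{\lambda,m}(s+1/2)}{1-2ml}$ right is a careful but routine computation once the channel decomposition is in place.
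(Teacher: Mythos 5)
Your proposal follows essentially the same route as the paper: isolate the $\frac{ml}{x}\gamma^{1}$ singularity at $x=0$ (the remaining potential, including $m(B(x)+\frac{l}{x})=O(x)$, being bounded), diagonalize $\gamma^{1}$ to solve the Euler-type model, use the boundary condition together with the domain asymptotics to fix the data $(c,d,-c,d)$ at $x=0$, run the Volterra iteration with kernel $\mathcal{M}_{0}(-x/t)$ to get the inductive bound, convergence and analyticity, and finish with a Gronwall-type estimate from $x=-1$ for the behaviour at $-\infty$. The only inaccuracy is your branch-selection heuristic: the constrained combinations $\varphi_{1}+\varphi_{3}$, $\varphi_{2}-\varphi_{4}$ lie in the $(+i)$-eigenspace of $\gamma^{1}$ and carry the $(-x)^{ml}$ branch, whose coefficient must be killed precisely because $(-x)^{ml}$ is \emph{not} $o(\sqrt{-x})$ when $2ml<1$ (and $(-x)^{-ml}$ is certainly not $o(\sqrt{-x})$ either), while the free constants $c,d$ sit in the unconstrained $(-i)$-channels $\varphi_{1}-\varphi_{3}$, $\varphi_{2}+\varphi_{4}$, which behave like $(-x)^{-ml}$ — exactly as in the paper's computation of $\mathcal{M}_{0}(1/x)\varphi$.
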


\begin{proof}
First, we write the equation under the form:
\begin{flalign*}
H_{m}^{s,n}\varphi = \lambda \varphi 
& \Leftrightarrow \partial_{x} \varphi +i \pare{s+\frac{1}{2}} \gamma^{1}\gamma^{2} A\pare{x} \varphi - im \gamma^{1} B\pare{x} \varphi = i \lambda \Gamma^{1} \varphi \\
& \Leftrightarrow \partial_{x} \varphi +i\frac{ml}{x} \gamma^{1} \varphi = i\lambda \Gamma^{1} \varphi - i \pare{s + \frac{1}{2}} \gamma^{1}\gamma^{2} A\pare{x} \varphi + im \gamma^{1} \pare{B\pare{x} + \frac{l}{x}} \varphi.
\end{flalign*}
We write:
\begin{equation*}
V_{\lambda,m}\pare{x} = i\lambda \Gamma^{1} - i \pare{s + \frac{1}{2}} \gamma^{1}\gamma^{2} A\pare{x} + im \gamma^{1} \pare{B\pare{x} + \frac{l}{x}}.
\end{equation*}
Since
\begin{flalign*}
A\pare{x} & =\frac{1}{l} + x^{2} + o\pare{x^{2}}, \hspace{2mm} x \sim 0,\\
B\pare{x} &=
-\frac{l}{x} + x + o\pare{x}, \hspace{2mm} x \sim 0,
\end{flalign*}
$V_{\lambda,m}$ is bounded near $0$. We are now studying the equation:
\begin{equation} \label{EqVpBor}
\partial_{x} \varphi +i \frac{ml}{x} \gamma^{1} \varphi = V_{\lambda,m}\pare{x} \varphi,
\end{equation}
where
\begin{equation*}
\gamma^{1} = i\begin{pmatrix}
0 & 0 & 1 & 0\\
0 & 0 & 0 & -1 \\
1 & 0 & 0 & 0 \\
0 & -1 & 0 & 0
\end{pmatrix}.
\end{equation*}
We first study the equation:
\begin{equation*}
\partial_{x} \psi + i \frac{ml}{x} \gamma^{1} \psi = 0.
\end{equation*}
We introduce the matrices:
\begin{equation*}
P = \frac{1}{\sqrt{2}}\begin{pmatrix}
1 & 0 & 1 & 0 \\
0 & 1 & 0 & 1 \\
1 & 0 & -1 & 0 \\
0 & -1 & 0 & 1
\end{pmatrix}; \hspace{5mm}
P^{-1} = \frac{1}{\sqrt{2}} \begin{pmatrix}
1 & 0 & 1 & 0 \\
0 & 1 & 0 & -1 \\
1 & 0 & -1 & 0 \\
0 & 1 & 0 & 1
\end{pmatrix},
\end{equation*}
such that:
\begin{equation*}
\gamma^{1} = P \begin{pmatrix}
i I_{2} & 0 \\
0 & -i I_{2}
\end{pmatrix} P^{-1}.
\end{equation*}
We obtain:
\begin{equation*}
\psi \pare{x} = P \begin{pmatrix}
\pare{-x}^{ml} & 0 & 0 & 0 \\
0 & \pare{-x}^{ml} & 0 & 0 \\
0 & 0 & \pare{-x}^{-ml} & 0 \\
0 & 0 & 0 & \pare{-x}^{-ml} \\
\end{pmatrix} P^{-1} \psi_{-1}
\end{equation*}
where $\psi_{-1}$ is a condition given on the value of $\psi$ at $-1$. We write:
\begin{equation*}
\mathcal{M}_{0}\pare{x} = P \begin{pmatrix}
\pare{-x}^{ml} & 0 & 0 & 0 \\
0 & \pare{-x}^{ml} & 0 & 0 \\
0 & 0 & \pare{-x}^{-ml} & 0 \\
0 & 0 & 0 & \pare{-x}^{-ml} \\
\end{pmatrix} P^{-1},
\end{equation*}
which satisfies the conditions
\begin{equation*}
\mathcal{M}_{0} \pare{x}^{-1} = \mathcal{M}_{0}\pare{\frac{1}{x}}; \hspace{5mm} \mathcal{M}_{0}\pare{-1} = I_{4}; \hspace{5mm} \mathcal{M}_{0}\pare{x} \gamma^{1} = \gamma^{1} \mathcal{M}_{0}\pare{x}.
\end{equation*}
Moreover, we have:
\begin{equation*}
\partial_{x}\pare{\mathcal{M}_{0}\pare{\frac{1}{x}}} = i \frac{ml}{x} \gamma^{1} \mathcal{M}_{0}\pare{\frac{1}{x}}.
\end{equation*}
Thus, if $\varphi$ is a solution of \eqref{EqVpBor}, then:
\begin{equation*}
\partial_{x} \pare{\mathcal{M}_{0}\pare{\frac{1}{x}} \varphi\pare{x}}  = \mathcal{M}_{0}\pare{\frac{1}{x}} V_{\lambda,m}\pare{x} \varphi\pare{x}.
\end{equation*}
Some elementary calculations give:
\begin{equation*}
\mathcal{M}_{0}\pare{\frac{1}{x}}\varphi = \frac{1}{2}\begin{pmatrix}
\pare{-x}^{-ml}\pare{\varphi_{1} + \varphi_{3}} + \pare{-x}^{ml}\pare{\varphi_{1}-\varphi_{3}} \\
\pare{-x}^{-ml} \pare{\varphi_{2} - \varphi_{4}} + \pare{-x}^{ml}\pare{\varphi_{2}+\varphi_{4}} \\
\pare{-x}^{-ml}\pare{\varphi_{1}+\varphi_{3}} + \pare{-x}^{ml}\pare{\varphi_{3}-\varphi_{1}} \\
\pare{-x}^{-ml}\pare{\varphi_{4}-\varphi_{2}} + \pare{-x}^{ml}\pare{\varphi_{2}+\varphi_{4}}
\end{pmatrix}
\end{equation*}
Using the boundary conditions, the functions $\pare{-x}^{-ml}\pare{\varphi_{1} + \varphi_{3}}$ and $\pare{-x}^{-ml} \pare{\varphi_{2} - \varphi_{4}}$ go to $0$ at $0$. Using the asymptotic behavior at $0$ of the elements of the domain $D\pare{H_{m}^{s,n}}$ given in \cite{IR14}, we see that $\frac{1}{2}\pare{-x}^{ml}\pare{\varphi_{1}-\varphi_{3}}$ and $\frac{1}{2}\pare{-x}^{ml}\pare{\varphi_{2}+\varphi_{4}}$ admit finite limits that we denote by $c$ and $d$ respectively. Since $V_{\lambda,m}$ is bounded at $0$, we can deduce the integral equation:
\begin{equation*}
\mathcal{M}_{0}\pare{\frac{1}{x}} \varphi\pare{x} = \begin{pmatrix}
c \\
d \\
-c \\
d
\end{pmatrix} + \int_{0}^{x} \mathcal{M}_{0}\pare{\frac{1}{t}} V_{\lambda,m}\pare{t} \varphi\pare{t} dt,
\end{equation*}
which gives:
\begin{equation*}
\varphi\pare{x} = \mathcal{M}_{0}\pare{x} \begin{pmatrix}
c \\
d \\
-c \\
d
\end{pmatrix} + \int_{0}^{x} \mathcal{M}_{0}\pare{x} \mathcal{M}_{0}\pare{\frac{1}{t}} V_{\lambda,m}\pare{t} \varphi\pare{t} dt.
\end{equation*}
Remark that:
\begin{equation*}
\mathcal{M}_{0}\pare{x} \mathcal{M}_{0}\pare{\frac{1}{t}} = \mathcal{M}_{0}\pare{-\frac{x}{t}}.
\end{equation*}
We look for a solution $\varphi$ under the form:
\begin{equation*}
\varphi\pare{x} = \underset{n\geq 0}{\sum} \varphi_{n}\pare{x},
\end{equation*}
with $\varphi_{0}\pare{x} = \mathcal{M}_{0}\pare{x} \begin{pmatrix}
c \\
d \\
-c \\
d
\end{pmatrix}$. We obtain the relations:
\begin{equation*}
\varphi_{n+1}\pare{x} = \int_{0}^{x} \mathcal{M}_{0}\pare{-\frac{x}{t}} V_{\lambda,m}\pare{t} \varphi_{n}\pare{t} dt.
\end{equation*}
We thus have to calculate $\mathcal{M}_{0}\pare{-\frac{x}{t}} V_{\lambda,m}\pare{t}$ where:
\begin{flalign*}
\hphantom{A} & \mathcal{M}_{0}\pare{-\frac{x}{t}} =\\
& \frac{1}{2} \begin{pmatrix}
\pare{\frac{x}{t}}^{ml} + \pare{\frac{x}{t}}^{-ml} & 0 & \pare{\frac{x}{t}}^{ml} - \pare{\frac{x}{t}}^{-ml} & 0 \\
0 & \pare{\frac{x}{t}}^{ml} + \pare{\frac{x}{t}}^{-ml} & 0 & -\pare{\pare{\frac{x}{t}}^{ml} - \pare{\frac{x}{t}}^{-ml}} \\
\pare{\frac{x}{t}}^{ml} - \pare{\frac{x}{t}}^{-ml} & 0 & \pare{\frac{x}{t}}^{ml} + \pare{\frac{x}{t}}^{-ml} & 0 \\
0 & - \pare{\pare{\frac{x}{t}}^{ml} - \pare{\frac{x}{t}}^{-ml}} & 0 & \pare{\frac{x}{t}}^{ml} + \pare{\frac{x}{t}}^{-ml}
\end{pmatrix}
\end{flalign*}
and:
\begin{equation*}
V_{\lambda,m}\pare{t} = \begin{pmatrix}
i \lambda & i \pare{s + \frac{1}{2}} A\pare{t} & -m \pare{B\pare{t} + \frac{l}{t}} & 0 \\
-i \pare{s+ \frac{1}{2}} A\pare{t} & -i\lambda & 0 & m\pare{B\pare{t} + \frac{l}{t}} \\
-m \pare{B\pare{t} + \frac{l}{t}} & 0 & -i \lambda & i\pare{s+ \frac{1}{2}}A\pare{t} \\
0 & m\pare{B\pare{t}+ \frac{l}{t}} & -i\pare{s + \frac{1}{2}} A\pare{t} & i \lambda 
\end{pmatrix}
\end{equation*}
We introduce the matrix:
\begin{equation*}
W_{\lambda,m}\pare{x,t} =  \begin{pmatrix}
- m\pare{B\pare{t} + \frac{l}{t}} & 0 & -i\lambda & i\pare{s+ \frac{1}{2}} A\pare{t} \\
0 & -m\pare{B\pare{t}+\frac{l}{t}} & i\pare{s + \frac{1}{2}} A\pare{t} & -i\lambda \\
i \lambda & i\pare{s+\frac{1}{2}}A\pare{t} & -m\pare{B\pare{t} + \frac{l}{t}} & 0 \\
i\pare{s+\frac{1}{2}}A\pare{t} & i \lambda & 0 & -m\pare{B\pare{t} + \frac{l}{t}} 
\end{pmatrix}
\end{equation*}
Then, tedious calculations lead to:
\begin{flalign*}
\mathcal{M}_{0}\pare{-\frac{x}{t}} V_{\lambda,m}\pare{t} 
&= \frac{1}{2} \left ( \pare{\pare{\frac{x}{t}}^{ml} + \pare{\frac{x}{t}}^{-ml}} V_{\lambda,m}\pare{x,t}\right . \\
& \quad \left .  + \pare{\pare{\frac{x}{t}}^{ml} - \pare{\frac{x}{t}}^{-ml}} W_{\lambda,m}\pare{x,t} \right )
\end{flalign*}
Write $N = 2 \max \pare{{\abs{c},\abs{d}}}$ and $C_{\lambda,m} = \underset{x\in ]-\infty,0[}{\max} \pare{\abs{\lambda},\abs{m\pare{B\pare{x}+\frac{l}{x}}},\abs{A\pare{x}}}$. We will show by induction that for $x\in ]-\infty,0[$, we have:
\begin{equation*}
\abs{\varphi_{n,j}\pare{x}} \leq  N \pare{-x}^{-ml} \frac{1}{n!} \pare{\frac{6 C_{\lambda,m}\pare{s+\frac{1}{2}}}{1-2ml}\pare{-x}}^{n}
\end{equation*}
for all $j=1,\cdots,4$, with $\varphi_{n} = \begin{pmatrix}
\varphi_{n,1} \\
\varphi_{n,2} \\
\varphi_{n,3} \\
\varphi_{n,4} 
\end{pmatrix}$.\\
This is true for the components of $\varphi_{0}$ since:
\begin{equation*}
\varphi_{0}\pare{x} = \begin{pmatrix}
2\pare{-x}^{-ml}c \\
2 \pare{-x}^{-ml}d \\
-2\pare{-x}^{-ml}c \\
2 \pare{-x}^{-ml}d
\end{pmatrix}.
\end{equation*}
Suppose that it is true for the components of $\varphi_{n}$ for some $n \in \N$, then:
\begin{flalign*}
\varphi_{n+1,1}\pare{x}& = \frac{1}{2}\left ( \int_{0}^{x} \pare{i\lambda\pare{\pare{\frac{x}{t}}^{ml} + \pare{\frac{x}{t}}^{-ml}} -m \pare{B\pare{t}+\frac{l}{t}}\pare{\pare{\frac{x}{t}}^{ml}-\pare{\frac{x}{t}}^{-ml}}}\varphi_{n,1}\pare{t} \right . \\
& \quad \left . + i\pare{s+\frac{1}{2}} A\pare{t} \pare{\pare{\frac{x}{t}}^{ml} + \pare{\frac{x}{t}}^{-ml}} \varphi_{n,2}\pare{t} \right . \\
& \quad \left . + \pare{-i\lambda \pare{\pare{\frac{x}{t}}^{ml} - \pare{\frac{x}{t}}^{-ml}} - m \pare{B\pare{t}+\frac{l}{t}}\pare{\pare{\frac{x}{t}}^{ml} + \pare{\frac{x}{t}}^{-ml}}} \varphi_{n,3}\pare{t} \right. \\
& \quad \left . + i\pare{s+\frac{1}{2}}A\pare{t}\pare{\pare{\frac{x}{t}}^{ml} - \pare{\frac{x}{t}}^{-ml}} \varphi_{n,4}\pare{t} dt \right )
\end{flalign*}
Upper bounds for:
\begin{equation*}
\abs{\pare{i\lambda\pare{\pare{\frac{x}{t}}^{ml} + \pare{\frac{x}{t}}^{-ml}} -m \pare{B\pare{t}+\frac{l}{t}}\pare{\pare{\frac{x}{t}}^{ml}-\pare{\frac{x}{t}}^{-ml}}}}
\end{equation*}
and:
\begin{equation*}
i\pare{s+\frac{1}{2}} A\pare{t} \pare{\pare{\frac{x}{t}}^{ml} + \pare{\frac{x}{t}}^{-ml}}
\end{equation*}
are respectively
\begin{equation*}
2C_{\lambda,m}  \pare{\pare{\frac{x}{t}}^{ml} + \pare{\frac{x}{t}}^{-ml}} \hspace{3mm} \text{and} \hspace{3mm} C_{\lambda,m}\pare{s+\frac{1}{2}}  \pare{\pare{\frac{x}{t}}^{ml} + \pare{\frac{x}{t}}^{-ml}}.
\end{equation*}
We obtain:
\begin{flalign*}
\abs{\varphi_{n+1,1}\pare{x}} & \leq 3 C_{\lambda,m} \pare{s+\frac{1}{2}} \int_{x}^{0} \pare{\pare{\frac{x}{t}}^{ml} + \pare{\frac{x}{t}}^{-ml}}\pare{\frac{N}{n!}\pare{\frac{6 C_{\lambda,m}\pare{s+\frac{1}{2}}}{1-2ml}}^{n}\pare{-t}^{n-ml}} dt \\
& \leq \frac{N}{2n!}\frac{\pare{6 C_{\lambda,m}\pare{s+\frac{1}{2}}}^{n+1}}{\pare{1-2ml}^{n}}\pare{\int_{x}^{0} \pare{-x}^{ml}\pare{-t}^{n-2ml} + \pare{-x}^{-ml}\pare{-t}^{n} dt}
\end{flalign*}
This last integral is equal to:
\begin{equation*}
\int_{x}^{0} \pare{-x}^{ml}\pare{-t}^{n-2ml} + \pare{-x}^{-ml}\pare{-t}^{n} dt  = \frac{\pare{-x}^{n+1-ml}}{\pare{n+1}\pare{1-\frac{2ml}{n+1}}} + \frac{\pare{-x}^{n+1-ml}}{n+1}.
\end{equation*}
Since $\frac{2ml}{n+1} \leq 2ml$, we have $\frac{1}{1-\frac{2ml}{n+1}} \leq \frac{1}{1 - 2ml}$. Since $0 < 1-2ml < 1$, $\frac{1}{1-2ml}>1$ and we obtain:
\begin{equation*}
\int_{x}^{0} \pare{-x}^{ml}\pare{-t}^{n-2ml} + \pare{-x}^{-ml}\pare{-t}^{n} dt \leq \frac{2}{\pare{n+1}\pare{1-2ml}} \pare{-x}^{n+1-ml}
\end{equation*}
Consequently:
\begin{equation*}
\abs{\varphi_{n+1,1}\pare{x}} \leq \frac{N \pare{-x}^{-ml}}{\pare{n+1}!}\pare{\frac{6C_{\lambda,m}\pare{s+\frac{1}{2}}}{1-2ml}}^{n+1} \pare{-x}^{n+1}
\end{equation*}
We can do the same with the other coefficients. We deduce that the series $\underset{n\geq 0}{\sum} \varphi_{n}\pare{x}$ converges and that:
\begin{equation*}
\norme{\underset{n\geq 0}{\sum} \varphi_{n}\pare{x}}\leq 4 N \pare{-x}^{-ml} e^{-\frac{6 C_{\lambda,m} \pare{s+\frac{1}{2}}}{1-2ml} x}.
\end{equation*}
Moreover, the boundary conditions are satisfied. Indeed, looking at the expression of $\varphi_{0}$ we see that $\varphi_{0,1}+ \varphi_{0,3} = 0$ and $\varphi_{0,2} - \varphi_{0,4}=0$. By the preceding induction, we know that the norm of $\varphi_{j}$ for all $j \geq 1$ is bounded by a constant times $\pare{-x}^{j-ml}$. Since $ml < \frac{1}{2}$, we deduce that $\frac{\pare{-x}^{j-ml}}{\pare{-x}^{\frac{1}{2}}} = \pare{-x}^{j-\frac{1}{2}-ml} \underset{x\to 0}{\to} 0$ and the boundary conditions are satisfied.\\
We also notice that, at each step, we have a polynomial in $\lambda$ and the convergence of the series is uniform on every compact set so that we obtained a solution which is analytic for $\Im\pare{\lambda} > - \frac{\kappa}{2}$.\\
Finally, we wish to obtain an estimate on the growth of $\varphi$ at $-\infty$. We write $H_{c} = \Gamma^{1}D_{x}$ and $V_{m}\pare{x}= H_{m}^{s,n} - H_{c}$. We have just shown the existence of a solution to:
\begin{equation*}
\partial_{x} \varphi \pare{x} = i\lambda \Gamma^{1} \varphi - i \Gamma^{1} V_{m}\pare{x} \varphi\pare{x}.
\end{equation*}
Denote the value of this solution at $-1$ by $\varphi_{-1}$. Then this solution can be written as:
\begin{equation*}
\varphi \pare{x} = \varphi_{-1} e^{i\lambda \Gamma^{1} \pare{x+1} -i\Gamma^{1} \int_{-1}^{x} V_{m}\pare{t} dt}.
\end{equation*}
For all $x <-1$, we have:
\begin{equation*}
\norme{\varphi \pare{x}} \leq \varphi_{-1}e^{-\abs{\Im\pare{\lambda}}\pare{x+1} + \int_{x}^{-1} \norme{V_{m}\pare{t}}dt}.
\end{equation*}
Since $V_{m}$ is integrable on $]-\infty,-1[$, we have:
\begin{equation*}
\norme{\varphi\pare{x}} \leq \varphi_{-1}e^{-\abs{\Im\pare{\lambda}} +\int_{-\infty}^{-1} \norme{V_{m}\pare{t}}dt} e^{-\abs{\Im\pare{\lambda}}x}.
\end{equation*}
Let $C = \varphi_{-1} e^{-\abs{\Im\pare{\lambda}} +\int_{-\infty}^{-1} \norme{V_{m}\pare{t}}dt}$. Then we obtain the desired estimate.
\end{proof}

We are now interested in the case $2ml\geq1$. The domain of our operator is then $D\pare{H_{m}^{s,n}}=\{\varphi \in \mathcal{H} \lvert H_{m}^{s,n} \varphi \in \mathcal{H}\}$. We have the:
\begin{prop}
Suppose that $2ml \geq 1$ and that $\lambda \in \C$ with $\Im\pare{\lambda}>- \frac{\kappa}{2}$. There exists a solution, analytic for $\Im\pare{\lambda}>- \frac{\kappa}{2}$, to the equation:
\begin{equation*}
H_{m}^{s,n}\varphi = \lambda \varphi
\end{equation*}
going to $0$ as $x$ goes to $0$. Moreover, we have the estimate:
\begin{equation*}
\norme{\varphi\pare{x}}\leq 4 N \pare{-x}^{ml} e^{-6 C_{\lambda,m} \pare{s+\frac{1}{2}} x},
\end{equation*}
where $N$ is a positive constant and $C_{\lambda,m} = \underset{x \in ]-\infty,0[}{\max} \pare{\abs{\lambda},m\abs{B\pare{x}+\frac{l}{x}},\abs{A\pare{x}}}$. Furthermore, we have the same estimate as in the preceding proposition:
\begin{equation*}
\norme{\varphi \pare{x}} \leq C e^{-\abs{\Im\pare{\lambda}}x},
\end{equation*}
as $x$ goes to $-\infty$.
\end{prop}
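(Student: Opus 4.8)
The plan is to follow the proof of Proposition~\ref{BCS1} almost verbatim, replacing the singular branch $(-x)^{-ml}$ of the model operator by the regular branch $(-x)^{ml}$: this is now the only branch compatible with $\varphi\in\mathcal H$ near $0$, since $\int_{-1}^0(-x)^{-2ml}\,dx$ diverges when $2ml\geq1$, while $\int_{-1}^0(-x)^{2ml}\,dx$ always converges. First I would put the equation $H_m^{s,n}\varphi=\lambda\varphi$ in the form
\begin{equation*}
\partial_x\varphi + i\frac{ml}{x}\gamma^1\varphi = V_{\lambda,m}(x)\varphi
\end{equation*}
with the same $V_{\lambda,m}$, and use the same conjugating matrix $\mathcal M_0$. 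The diagonalisation $\gamma^1 = P\,\mathrm{diag}(iI_2,-iI_2)P^{-1}$ shows that the solutions of $\partial_x\psi+i\frac{ml}{x}\gamma^1\psi=0$ decaying like $(-x)^{ml}$ at $0$ are exactly the vectors $\mathcal M_0(x)w$ with $P^{-1}w\in\mathrm{span}\{e_1,e_2\}$, i.e. $\varphi_0(x)=(-x)^{ml}(c,d,c,-d)^t$ for constants $c,d$; this is the seed of the Volterra iteration
\begin{equation*}
\varphi(x) = \varphi_0(x) + \int_0^x \mathcal M_0\!\pare{-\tfrac xt}V_{\lambda,m}(t)\varphi(t)\,dt .
\end{equation*}

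Next I would set $\varphi=\sum_{n\geq0}\varphi_n$ with $\varphi_{n+1}(x)=\int_0^x\mathcal M_0(-x/t)V_{\lambda,m}(t)\varphi_n(t)\,dt$, and prove by induction that, with $N$ a positive constant (a multiple of $\max(\abs c,\abs d)$) and $C_{\lambda,m}$ as in the statement,
\begin{equation*}
\abs{\varphi_{n,j}(x)}\leq N(-x)^{ml}\frac1{n!}\pare{6C_{\lambda,m}\pare{s+\tfrac12}(-x)}^n,\qquad j=1,\dots,4.
\end{equation*}
The inductive step uses the same decomposition of $\mathcal M_0(-x/t)V_{\lambda,m}(t)$ as in Proposition~\ref{BCS1}; the power integrals that now occur are $\int_x^0(-t)^n\,dt=(-x)^{n+1}/(n+1)$ and $\int_x^0(-t)^{n+2ml}\,dt=(-x)^{n+1+2ml}/(n+1+2ml)$, and since $1/(n+1+2ml)\leq1/(n+1)$ no factor $1/(1-2ml)$ is produced — which is precisely why the estimate in the statement is the clean one. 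Summing the resulting exponential series yields $\norme{\varphi(x)}\leq4N(-x)^{ml}e^{-6C_{\lambda,m}(s+\frac12)x}$; each $\varphi_n$ is a polynomial in $\lambda$ and the series converges uniformly on compact sets, so $\varphi$ is analytic for $\Im(\lambda)>-\kappa/2$; and since $\varphi_0\sim(-x)^{ml}\to0$ while $\varphi_n=O((-x)^{n+ml})$, the solution tends to $0$ as $x\to0$ and lies in $\mathcal H$ there, as does $H_m^{s,n}\varphi=\lambda\varphi$.

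The behaviour at $-\infty$ is obtained exactly as in Proposition~\ref{BCS1}: with $H_c=\Gamma^1D_x$ and $V_m=H_m^{s,n}-H_c$ one represents $\varphi(x)=\varphi_{-1}e^{i\lambda\Gamma^1(x+1)-i\Gamma^1\int_{-1}^xV_m(t)\,dt}$, and since $V_m$ decays like $e^{\kappa x}$ it is integrable on $]-\infty,-1[$, which gives $\norme{\varphi(x)}\leq Ce^{-\abs{\Im(\lambda)}x}$ as $x\to-\infty$. The only step calling for genuine care is the first one: recognising that, in the absence of any boundary condition when $2ml\geq1$, the regular branch $(-x)^{ml}$ is forced by the requirement $\varphi\in\mathcal H$, and checking that the Volterra kernel remains integrable down to $t=0$. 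In fact this integrability is easier here than in Proposition~\ref{BCS1}: the a priori dangerous factor $(x/t)^{-ml}$ of $\mathcal M_0(-x/t)$ is tamed by $(x/t)^{-ml}(-t)^{ml}=\pm(-x)^{-ml}(-t)^{2ml}$ with $2ml\geq1$, whereas in the previous proposition the constraint $2ml<1$ was needed to integrate $(-t)^{-2ml}$. Everything else is the previous induction with the exponent $-ml$ replaced throughout by $+ml$.
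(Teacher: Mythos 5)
Your proposal is correct and follows essentially the same route as the paper: the same reduction to $\partial_x\varphi+i\frac{ml}{x}\gamma^1\varphi=V_{\lambda,m}\varphi$, the same Volterra iteration seeded by the regular branch $\mathcal M_0(x)(a,b,a,-b)^t\propto(-x)^{ml}$, the same induction with the integrals $\int_x^0(-t)^n\,dt$ and $\int_x^0(-t)^{n+2ml}\,dt$ and the observation that $n+1+2ml\geq n+1$ removes the $1/(1-2ml)$ factor, and the same representation argument for the $e^{-\abs{\Im(\lambda)}x}$ bound at $-\infty$. Your added justification that the $(-x)^{ml}$ branch is forced by square-integrability when $2ml\geq1$ is a useful remark that the paper leaves implicit, but it does not change the argument.
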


\begin{proof}
We can do the same argument as in the last proof. We obtain a new equation:
\begin{equation*}
\partial_{x} \pare{\mathcal{M}_{0}\pare{\frac{1}{x}} \varphi\pare{x}} = \mathcal{M}_{0}\pare{\frac{1}{x}} V_{\lambda,m}\pare{x} \varphi\pare{x}.
\end{equation*}
The corresponding integral equation is:
\begin{equation*}
\varphi \pare{x} = \mathcal{M}_{0}\pare{x} \begin{pmatrix}
a \\
b \\
a \\
-b
\end{pmatrix}
+ \int_{0}^{x} \mathcal{M}_{0}\pare{-\frac{x}{t}} V_{\lambda,m}\pare{t}\varphi\pare{t} dt,
\end{equation*}
where $a,b$ are two real constants. We look for $\varphi$ under the form:
\begin{equation*}
\varphi\pare{x} = \underset{n\geq 0}{\sum} \varphi_{n}\pare{x},
\end{equation*}
with:
\begin{equation*}
\varphi_{0}\pare{x} = \mathcal{M}_{0}\pare{x} \begin{pmatrix}
a \\
b \\
a \\
-b
\end{pmatrix} = \begin{pmatrix}
2a \pare{-x}^{ml} \\
2b \pare{-x}^{ml} \\
2a \pare{-x}^{ml} \\
2b \pare{-x}^{ml}
\end{pmatrix},
\end{equation*}
and the recursive equations:
\begin{equation*}
\varphi_{n+1}\pare{x} = \int_{0}^{x} \mathcal{M}_{0}\pare{-\frac{x}{t}} V_{\lambda,m}\pare{t}\varphi_{n}\pare{t} dt.
\end{equation*}
We write $\varphi_{n} = \begin{pmatrix}
\varphi_{n,1} \\
\varphi_{n,2} \\
\varphi_{n,3} \\
\varphi_{n,4}
\end{pmatrix}$, $N = 2\max \pare{\abs{a},\abs{b}}$ and $C_{\lambda,m}$ as in the proposition. We want to show that:
\begin{equation*}
\abs{\varphi_{n,j}\pare{x}} \leq N \pare{-x}^{ml} \frac{1}{n!}\pare{6C_{\lambda,m}\pare{s+\frac{1}{2}}\pare{-x}}^{n},
\end{equation*}
for all $j=1,\cdots,4$. Indeed, this is true for the components of $\varphi_{0}$. Suppose that the components of $\varphi_{n}$ satisfy this estimate. Then, as in the preceding proof, we have:
\begin{equation*}
\abs{\varphi_{n+1,1}\pare{x}} \leq \frac{N}{2n!}\pare{6 C_{\lambda,m}\pare{s+\frac{1}{2}}}^{n+1}\pare{\int_{x}^{0} \pare{-x}^{ml}\pare{-t}^{n} + \pare{-x}^{-ml}\pare{-t}^{n+2ml} dt}
\end{equation*}
This integral is equal to:
\begin{equation*}
\int_{x}^{0} \pare{-x}^{ml}\pare{-t}^{n} + \pare{-x}^{-ml}\pare{-t}^{n+2ml} dt  = \frac{\pare{-x}^{n+1+ml}}{n+1} + \frac{\pare{-x}^{n+1+ml}}{n+1+2ml}.
\end{equation*}
Since $n+1+2ml \geq n+1$, we deduce that:
\begin{equation*}
\abs{\varphi_{n+1,1}\pare{x}} \leq N \pare{-x}^{ml} \frac{1}{\pare{n+1}!}\pare{6C_{\lambda,m}\pare{s+\frac{1}{2}}\pare{-x}}^{n+1}.
\end{equation*}
As before, at each step, we have a polynomial in $\lambda$ so that we obtained an analytic function. The last estimate follows from the same argument as in the preceding proof.
\end{proof}

\section{Resolvent formula}

In this section, we denote by $\psi$ a Jost solution corresponding to $\varphi_{3}$ and $\varphi$ a solution satisfying the boundary conditions. We look for a solution $u$ of:
\begin{equation*}
\pare{H_{m}^{s,n} - \lambda}u = f
\end{equation*}
with $f \in L^{2}\pare{]-\infty,0[}$ for $\Im\pare{\lambda}>0$.\\
We introduce $\tilde{\psi} = \pare{-i} \gamma^{0} \gamma^{1} \gamma^{2} \psi$ and $\tilde{\varphi}  = \pare{-i} \gamma^{0} \gamma^{1} \gamma^{2} \varphi$ where $\gamma^{0}$, $\gamma^{1}$, $\gamma^{2}$ are the Dirac matrices \eqref{MatDir}. We also write $\alpha = \varphi_{1} \psi_{2} - \psi_{1} \varphi_{2} + \varphi_{3} \psi_{4} - \psi_{3} \varphi_{4}$, $\beta = \varphi_{1} \psi_{3} - \psi_{1} \varphi_{3} + \varphi_{2} \psi_{4} - \psi_{2} \varphi_{4}$ and:
\begin{equation}\label{Mabexpr}
M_{\alpha,\beta} =\begin{pmatrix}
0 & \alpha & \beta & 0 \\
-\alpha & 0 & 0 & \beta \\
-\beta & 0 & 0 & \alpha \\
0 &  -\beta & -\alpha & 0 
\end{pmatrix}.
\end{equation}
We begin by a lemma about this matrix:
\begin{lem}
The functions $\alpha$ and $\beta$ are independant of $x$. Moreover, the matrix $M_{\alpha,\beta}$ is invertible.
\end{lem}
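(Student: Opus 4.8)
The plan is to establish the two assertions by working with the eigenvalue equation $H_{m}^{s,n}u=\lambda u$ in the first--order form $\partial_{x}u=\mathcal{A}_{\lambda}\pare{x}u$, where $\mathcal{A}_{\lambda}\pare{x}=i\lambda\Gamma^{1}-i\Gamma^{1}V_{m}\pare{x}$ and $V_{m}\pare{x}=\pare{s+\frac12}\gamma^{0}\gamma^{2}A\pare{x}-m\gamma^{0}B\pare{x}$, exactly as in the proof of Proposition \ref{propJost}. For the first point, I would notice that $\alpha=\varphi^{t}C_{\alpha}\psi$ and $\beta=\varphi^{t}C_{\beta}\psi$, where
\[
C_{\alpha}=\begin{pmatrix}0&1&0&0\\-1&0&0&0\\0&0&0&1\\0&0&-1&0\end{pmatrix}=-\Gamma^{1}\gamma^{0}\gamma^{2},\qquad C_{\beta}=\begin{pmatrix}0&0&1&0\\0&0&0&1\\-1&0&0&0\\0&-1&0&0\end{pmatrix}=-i\Gamma^{1}\gamma^{1}.
\]
Since $\varphi$ and $\psi$ both solve $\partial_{x}u=\mathcal{A}_{\lambda}u$, one has $\partial_{x}\pare{\varphi^{t}C\psi}=\varphi^{t}\pare{\mathcal{A}_{\lambda}^{t}C+C\mathcal{A}_{\lambda}}\psi$ for any constant matrix $C$, so the constancy of $\alpha$ and $\beta$ amounts to the two matrix identities $\mathcal{A}_{\lambda}^{t}C_{\bullet}+C_{\bullet}\mathcal{A}_{\lambda}=0$. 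Using $\pare{\Gamma^{1}}^{t}=\Gamma^{1}$, each of these splits into the $\lambda$--dependent part $\Gamma^{1}C_{\bullet}+C_{\bullet}\Gamma^{1}=0$ and the potential part $V_{m}^{t}\pare{\Gamma^{1}C_{\bullet}}=\pare{\Gamma^{1}C_{\bullet}}V_{m}$; both reduce to the facts that $C_{\alpha},C_{\beta}$ anticommute with $\Gamma^{1}$ and that $\Gamma^{1}C_{\bullet}$ intertwines $V_{m}$ with its transpose, which follow from a short computation with the Dirac matrices (using $\pare{\gamma^{0}}^{t}=-\gamma^{0}$, $\pare{\gamma^{0}\gamma^{2}}^{t}=\gamma^{0}\gamma^{2}$, $\pare{\gamma^{0}}^{2}=\pare{\gamma^{0}\gamma^{2}}^{2}=I$ and the anticommutation relations). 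This is the routine part.

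For the invertibility I would first compute the determinant. As $M_{\alpha,\beta}$ is an antisymmetric $4\times4$ matrix, $\det M_{\alpha,\beta}$ is the square of its Pfaffian, and reading off the entries gives $\mathrm{Pf}\pare{M_{\alpha,\beta}}=\alpha^{2}-\beta^{2}$, hence $\det M_{\alpha,\beta}=\pare{\alpha^{2}-\beta^{2}}^{2}$. So it suffices to show $\alpha^{2}\neq\beta^{2}$, and I would do this by recognising $\alpha^{2}-\beta^{2}$ as a Wronskian. Let $S:=-i\gamma^{0}\gamma^{1}\gamma^{2}$, the matrix used to define $\tilde\psi=S\psi$ and $\tilde\varphi=S\varphi$; a direct check shows that $S$ commutes with $H_{m}^{s,n}$ and with $\gamma^{1}$, so $\tilde\psi$ and $\tilde\varphi$ are again solutions of the equation, $\tilde\varphi$ satisfying the same boundary condition as $\varphi$, and moreover $SC_{\alpha}=C_{\beta}$, $SC_{\beta}=C_{\alpha}$. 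Writing $W\pare{x}$ for the $4\times4$ matrix with columns $\psi\pare{x},\tilde\psi\pare{x},\varphi\pare{x},\tilde\varphi\pare{x}$, these relations together with the antisymmetry of $C_{\alpha}$ give
\[
W^{t}C_{\alpha}W=\begin{pmatrix}0&0&-\alpha&-\beta\\0&0&-\beta&-\alpha\\\alpha&\beta&0&0\\\beta&\alpha&0&0\end{pmatrix},
\]
whence $\pare{\det W}^{2}=\det\pare{W^{t}C_{\alpha}W}=\pare{\alpha^{2}-\beta^{2}}^{2}$ since $\det C_{\alpha}=1$. Thus $M_{\alpha,\beta}$ is invertible if and only if $\psi,\tilde\psi,\varphi,\tilde\varphi$ form a fundamental system of solutions.

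The remaining --- and genuinely substantial --- step is to prove that this is so when $\Im\pare{\lambda}>0$. Here I would use the asymptotic estimates already obtained: by Proposition \ref{propJost}, $\psi$ and $\tilde\psi$ are linearly independent (their leading terms at $-\infty$ are $e^{-i\lambda x}$ sitting in different components) and span the two--dimensional space of solutions that are square--integrable near $-\infty$; by Proposition \ref{BCS1} (for $2ml<1$; the case $2ml\geq1$ being handled by the other proposition), $\varphi$ and $\tilde\varphi$ are linearly independent --- near $0$ one has $\varphi\sim2\pare{-x}^{-ml}\pare{c,d,-c,d}^{t}$ and correspondingly $\tilde\varphi\sim2\pare{-x}^{-ml}\pare{-d,-c,d,-c}^{t}$ --- and span the two--dimensional space of solutions satisfying the boundary condition, which, because $2ml<1$, are square--integrable near $0$. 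If $\det W$ vanished, a nontrivial linear combination of $\psi,\tilde\psi$ would equal a linear combination of $\varphi,\tilde\varphi$, producing a nonzero $w\in D\pare{H_{m}^{s,n}}$ with $H_{m}^{s,n}w=\lambda w$; this is impossible since $H_{m}^{s,n}$ is self--adjoint and $\lambda\notin\R$. Hence $\det W\neq0$, so $\alpha^{2}\neq\beta^{2}$ and $M_{\alpha,\beta}$ is invertible. The main obstacle is precisely this transversality argument --- checking that the Jost solutions and the boundary solutions each span the expected plane and that these two planes meet only at $0$ --- everything else being linear algebra and Dirac--matrix bookkeeping.
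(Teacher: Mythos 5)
Your proof of the constancy of $\alpha$ and $\beta$ is a clean matrix repackaging of the paper's componentwise computation and is correct (I checked the identifications $C_{\alpha}=-\Gamma^{1}\gamma^{0}\gamma^{2}$, $C_{\beta}=-i\Gamma^{1}\gamma^{1}$ and the required commutation properties). For the invertibility you take a genuinely different route: the paper feeds the unnormalised kernel $T_{m}\pare{\lambda}$ into the eigenvalue equation and appeals to the absence of eigenvalues from \cite{IR14}, whereas you identify $\det M_{\alpha,\beta}=\pare{\alpha^{2}-\beta^{2}}^{2}$ with $\pare{\det W}^{2}$ for the Wronskian matrix $W=\pare{\psi,\tilde\psi,\varphi,\tilde\varphi}$ (your formula for $W^{t}C_{\alpha}W$ and the relations $SC_{\alpha}=C_{\alpha}S=C_{\beta}$, $SC_{\beta}=C_{\alpha}$, $S^{2}=I$ all check out) and reduce the lemma to the four solutions forming a fundamental system, which you obtain from self-adjointness and $\Im\pare{\lambda}>0$. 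This is more structural, more self-contained, and it exposes exactly where non-invertibility can enter.

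That exposure reveals the one step you cannot take for granted: the linear independence of $\varphi$ and $\tilde\varphi$. The leading vectors $\pare{c,d,-c,d}^{t}$ and $\pare{-d,-c,d,-c}^{t}$ are proportional precisely when $c=\pm d$, and in that case the degeneracy is not an artefact of the asymptotics: the data map $\pare{c,d}\mapsto\pare{-d,-c}$ is the action of $S$ on the two-parameter family of boundary solutions, so $c=\pm d$ means $\varphi$ is an eigenvector of $S$, $\tilde\varphi=\mp\varphi$ identically, hence $\beta=\mp\alpha$ and $M_{\alpha,\beta}$ really is singular. So the lemma as stated requires the boundary solution to be chosen with $c^{2}\neq d^{2}$ (and analogously $a^{2}\neq b^{2}$ when $2ml\geq 1$); you should add this hypothesis, after which your transversality argument closes the proof. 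You are in good company: the paper's own proof silently trips over the same point, since it never verifies that the putative eigenvector $T_{m}\pare{\lambda}f$ is nonzero, and when $\tilde\varphi=\varphi$ one has $\pare{\psi+\tilde\psi}^{t}f=0$ and $\varphi^{t}f=v\pare{\varphi_{2}-\varphi_{3}}=0$ for the chosen $f$, so $T_{m}\pare{\lambda}f\equiv 0$ and the contradiction evaporates exactly in the degenerate case. Everything else in your argument --- the independence of $\psi$ and $\tilde\psi$ from the Jost asymptotics, the membership of each pair in $L^{2}$ near the relevant end together with the boundary condition, and the conclusion via self-adjointness --- is sound.
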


\begin{proof}
We first use the fact that $\varphi$, $\psi$, $\tilde{\varphi}$ and $\tilde{\psi}$ satisfy $H_{m}^{s,n} \varphi = \lambda \varphi$, $H_{m}^{s,n}\psi= \lambda \psi$, $H_{m}^{s,n} \tilde{\varphi} = \lambda \tilde{\varphi}$ and $H_{m}^{s,n} \tilde{\psi} = \lambda \tilde{\psi}$. For the coordinates of $\varphi = \begin{pmatrix}
\varphi_{1} \\
\varphi_{2} \\
\varphi_{3} \\
\varphi_{4} 
\end{pmatrix}$, we obtain the equations:
\begin{flalign*}
\partial_{x} \varphi_{1} \pare{x} & = i\lambda \varphi_{1} \pare{x} + i\pare{s+\frac{1}{2}}A\pare{x} \varphi_{2}\pare{x} - mB\pare{x} \varphi_{3}\pare{x}, \\
\partial_{x} \varphi_{2} \pare{x} & = -i\lambda \varphi_{2} \pare{x} -i\pare{s+\frac{1}{2}}A\pare{x} \varphi_{1}\pare{x} + mB\pare{x} \varphi_{4} \pare{x}, \\
\partial_{x} \varphi_{3}\pare{x} & = -i\lambda \varphi_{3}\pare{x} + i\pare{s+\frac{1}{2}}A\pare{x} \varphi_{4}\pare{x} - mB\pare{x} \varphi_{1}\pare{x}, \\
\partial_{x}\varphi_{4} \pare{x} & = i\lambda \varphi_{4}\pare{x} - i \pare{s+\frac{1}{2}}A\pare{x} \varphi_{3}\pare{x} + mB\pare{x} \varphi_{2}\pare{x}.
\end{flalign*}
The same equations are satisfied for the other solutions. Then we can calculate:
\begin{flalign*}
\partial_{x} \alpha  & = \partial_{x}\pare{\varphi_{1}} \psi_{2} + \varphi_{1} \partial_{x}\pare{\psi_{2}} - \partial_{x}\pare{\psi_{1}} \varphi_{2} - \psi_{1} \partial_{x} \pare{\varphi_{2}} + \partial_{x}\pare{\varphi_{3}}\psi_{4} + \varphi_{3} \partial_{x} \pare{\psi_{4}} \\
& \quad - \partial_{x} \pare{\psi_{3}}\varphi_{4} - \psi_{3}\partial_{x} \varphi_{4} 
\end{flalign*}
\begin{flalign*}
\hphantom{A} & = \pare{i\lambda \varphi_{1}  + i\pare{s+\frac{1}{2}}A \varphi_{2} - mB \varphi_{3}}\psi_{2} + \varphi_{1} \pare{-i\lambda \psi_{2} -i\pare{s+\frac{1}{2}}A \psi_{1} + mB \psi_{4}} \\
& \quad - \pare{i\lambda \psi_{1}  + i\pare{s+\frac{1}{2}}A \psi_{2} - mB \psi_{3}}\varphi_{2} - \psi_{1}\pare{-i\lambda\varphi_{2} - i\pare{s+\frac{1}{2}}A\varphi_{1} + mB\varphi_{4}} \\
& \quad + \pare{-i\lambda \varphi_{3} + i\pare{s+\frac{1}{2}} A \varphi_{4} - m B \varphi_{1}}\psi_{4} + \varphi_{3} \pare{i\lambda\psi_{4} - i\pare{s+\frac{1}{2}}A \psi_{3} + m B\psi_{2}} 
\end{flalign*}
\begin{flalign*}
\hphantom{A} & \quad - \pare{-i\lambda \psi_{3} + i\pare{s+\frac{1}{2}} A \psi_{4} - mB \psi_{1}} \varphi_{4} - \psi_{3} \pare{i\lambda \varphi_{4} - i\pare{s+\frac{1}{2}}A \varphi_{3} + mB\varphi_{2}} \\
& = \pare{\varphi_{1} \psi_{2} -  \varphi_{1} \psi_{2} - \psi_{1}\varphi_{2} + \psi_{1}\varphi_{2} - \varphi_{3}\psi_{4} + \varphi_{3} \psi_{4} +\psi_{3}\varphi_{4} - \psi_{3}\varphi_{4}}i\lambda \\
& \quad + \pare{\varphi_{2}\psi_{2} -\varphi_{1}\psi_{1} - \psi_{2} \varphi_{2} + \psi_{1}\varphi_{1} + \varphi_{4} \psi_{4} - \varphi_{3} \psi_{3} - \psi_{4}\varphi_{4} + \psi_{3}\varphi_{3}} i\pare{s+\frac{1}{2}}A\\
& \quad + \pare{- \varphi_{3} \psi_{2} + \varphi_{1} \psi_{4} + \psi_{3}\varphi_{2} - \psi_{1}\varphi_{4} - \varphi_{1}\psi_{4} + \varphi_{3} \psi_{2} + \psi_{1}\varphi_{4} - \psi_{3} \varphi_{2}}mB \\
& = 0
\end{flalign*} 
which shows that $\alpha$ is independant of $x$. A similar calculation holds for $\beta$.\\
Concerning the invertibility of $M_{\alpha,\beta}$, its determinant is given by:
\begin{equation*}
\det \pare{M_{\alpha,\beta}}  = \pare{\pare{\alpha-\beta}\pare{\alpha +\beta}}^{2}.
\end{equation*}
This matrix is thus not invertible if $\alpha = \beta$ or $\alpha = -\beta$. Suppose, for example, that $\alpha = \beta$. We write:
\begin{flalign*}
T_{m}\pare{\lambda} f \pare{x}& = \int_{-\infty}^{x} \pare{\varphi\pare{x} \psi^{t}\pare{y} + \tilde{\varphi}\pare{x} \tilde{\psi}^{t}\pare{y}}f\pare{y} dy \\
& \quad + \int_{x}^{0}\pare{\psi\pare{x} \varphi^{t}\pare{y} + \tilde{\psi}\pare{x} \tilde{\varphi}^{t}\pare{y}}f\pare{y} dy.
\end{flalign*}
$T_{m}\pare{\lambda}$ is an operator with kernel:
\begin{equation*}
T_{m}\pare{x,y,\lambda}= \pare{\varphi\pare{x} \psi^{t}\pare{y} + \tilde{\varphi}\pare{x} \tilde{\psi}^{t}\pare{y}}\mathds{1}_{]-\infty,x[}\pare{y} + \pare{\psi\pare{x}\varphi^{t}\pare{y} + \tilde{\psi}\pare{x} \tilde{\varphi}^{t}\pare{y}}\mathds{1}_{]x,0[}\pare{y}.
\end{equation*}
Using again that $\varphi$, $\psi$, $\tilde{\varphi}$ and $\tilde{\psi}$ are generalized eigenvector of $H_{m}^{s,n}$ for the eigenvalue $\lambda$, we get:
\begin{equation*}
H_{m}^{s,n} \pare{T_{m}\pare{\lambda}f} \pare{x}  = \lambda T_{m}\pare{\lambda}f\pare{x} - i\Gamma^{1} M_{\alpha,\alpha} f\pare{x}.
\end{equation*}
We can choose a function $v \in L^{2}\pare{]-\infty,0[}$ and take $f = v\begin{pmatrix}
0 \\
1 \\
-1 \\
0 
\end{pmatrix}$ such that:
\begin{equation*}
M_{\alpha,\alpha}f = \begin{pmatrix}
0 & \alpha & \alpha & 0 \\
-\alpha & 0 & 0 & \alpha \\
-\alpha & 0 & 0 & \alpha \\
0 &  -\alpha & -\alpha & 0 
\end{pmatrix} f = 0.
\end{equation*}
We thus would have an eigenvector for the eigenvalue $\lambda$ which is impossible since there's no eigenvalue for our operator (by proposition $3.11$ in \cite{IR14}). We can do the same when $\alpha = - \beta$. Thus $M_{\alpha,\beta}$ is invertible.
\end{proof}
Now, consider the function defined by:
\begin{flalign*}
R_{m}^{s,n}\pare{x,y,\lambda} & = \pare{\varphi\pare{x} \psi^{t}\pare{y} + \tilde{\varphi}\pare{x} \tilde{\psi}^{t}\pare{y}} M_{\alpha,\beta}^{-1} i\Gamma^{1} \mathds{1}_{]-\infty,x[}\pare{y} \\
& \quad + \pare{\psi\pare{x}\varphi^{t}\pare{y} + \tilde{\psi}\pare{x} \tilde{\varphi}^{t}\pare{y}}M_{\alpha,\beta}^{-1} i\Gamma^{1}\mathds{1}_{]x,0[}\pare{y}
\end{flalign*}
for $\Im\pare{\lambda} >0$. We define the corresponding integral operator:
\begin{equation*}
R_{m}^{s,n}\pare{\lambda}f \pare{x} = \int_{-\infty}^{0} R_{m}^{s,n}\pare{x,y,\lambda}f\pare{y} dy.
\end{equation*}
We first show the boundedness of this operator:
\begin{lem}\label{BDEDRES}
The operator $R_{m}^{s,n}\pare{\lambda}$ is bounded from $\mathcal{H}_{s,n}$ into itself for any $m>0$ and $\lambda$ such that $\Im\pare{\lambda}>0$.
\end{lem}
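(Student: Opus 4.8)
The plan is to take norms in the matrix valued kernel, reduce to a scalar integral operator on $L^{2}\pare{]-\infty,0[}$, and run a weighted Schur test adapted to the singular endpoint $x=0$. First I would observe that $M_{\alpha,\beta}^{-1}i\Gamma^{1}$ is a constant matrix and that, since $\gamma^{0}\gamma^{1}\gamma^{2}$ is unitary up to a scalar, $\norme{\tilde\varphi\pare{x}}=\norme{\varphi\pare{x}}$ and $\norme{\tilde\psi\pare{x}}=\norme{\psi\pare{x}}$ up to a fixed constant. Hence there is $C>0$ with
\[ \norme{R_{m}^{s,n}\pare{x,y,\lambda}}\leq C\pare{\norme{\varphi\pare{x}}\,\norme{\psi\pare{y}}\,\mathds{1}_{]-\infty,x[}\pare{y}+\norme{\psi\pare{x}}\,\norme{\varphi\pare{y}}\,\mathds{1}_{]x,0[}\pare{y}}, \]
and it suffices to bound on $L^{2}\pare{]-\infty,0[}$ the positive integral operator with this scalar kernel; the bound on $\mathcal{H}_{s,n}$ then follows since the spherical harmonic factor $Y_{s,n}$ is inert.

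Second, I would collect the pointwise estimates of Propositions~\ref{propJost}, \ref{BCS1} and of the proposition following the latter, specialized to $\Im\pare{\lambda}>0$. For such $\lambda$ one has $\max\pare{0,2\Im\pare{\lambda}t}=0$ when $t<0$, so the Jost estimate reads $\norme{\psi\pare{x}}\leq e^{\Im\pare{\lambda}x}e^{\int_{-\infty}^{x}\norme{V_{m}\pare{t}}dt}$; since $V_{m}$ decays exponentially at $-\infty$ this yields $\norme{\psi\pare{x}}\leq Ce^{\Im\pare{\lambda}x}$ as $x\to-\infty$, while $\norme{V_{m}\pare{t}}\sim ml/\abs{t}$ near $0$ forces $\int_{-\infty}^{x}\norme{V_{m}\pare{t}}\,dt\sim-ml\ln\pare{-x}$ and hence $\norme{\psi\pare{x}}\leq C\pare{-x}^{-ml}$ as $x\to0^{-}$. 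On the other side, $\norme{\varphi\pare{x}}\leq Ce^{-\Im\pare{\lambda}x}$ as $x\to-\infty$, and near $0$ one has $\norme{\varphi\pare{x}}\leq C\pare{-x}^{-ml}$ when $2ml<1$ and $\norme{\varphi\pare{x}}\leq C\pare{-x}^{ml}$ when $2ml\geq1$.

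Third comes the Schur test. I would split $]-\infty,0[=\,]-\infty,-1]\cup[-1,0[$. On $]-\infty,-1]$ the kernel above is dominated by $Ce^{-\Im\pare{\lambda}\abs{x-y}}$, a convolution kernel in $L^{1}$, so the constant weight closes the two Schur inequalities there and in the mixed region where one variable lies in $[-1,0[$ and one of the exponential factors is already $\leq e^{-\Im\pare{\lambda}}$. Near $0$ I would take the weight $w\pare{x}=\pare{-x}^{-ml}$ if $2ml<1$ and $w\pare{x}=\pare{-x}^{ml}$ if $2ml\geq1$, extended by $w\equiv1$ on $]-\infty,-1]$; computing $\int\norme{R_{m}^{s,n}}w\pare{y}\,dy$ and $\int\norme{R_{m}^{s,n}}w\pare{x}\,dx$ over $[-1,0[$ with the monomial bounds above — where the exponents $1-2ml$, resp. $2ml+1$, appearing are positive, and the singular power $\pare{-\cdot}^{-ml}$ carried by $\psi$ is compensated by the weight, which matches the vanishing $\pare{-\cdot}^{ml}$ of $\varphi$ — gives both inequalities with constants independent of $x,y$. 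The weighted Schur test then bounds $R_{m}^{s,n}\pare{\lambda}$.

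The main obstacle is the analysis near $x=0$ in the regime $2ml\geq1$: there $H_{m}^{s,n}$ is in the limit point case at $0$, so the Jost solution $\psi$ is itself not in $L^{2}$ near $0$ (its bound being only $\pare{-x}^{-ml}$), and the $L^{2}$ boundedness of $R_{m}^{s,n}\pare{\lambda}$ is recovered only through the pairing of $\psi$ with $\varphi$, the $L^{2}$ solution at $0$, the weight being tuned precisely to this pairing. Making the weighted Schur inequalities close in this range — equivalently, verifying that an admissible weight exponent exists, which is exactly where the dichotomy $2ml<1$ versus $2ml\geq1$ enters — is the delicate point; the remaining estimates are routine.
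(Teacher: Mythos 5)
Your argument is correct, and it reaches the conclusion by a genuinely more uniform route than the paper at exactly the point you flag as delicate. The paper also reduces to the scalar factorized kernel and uses the same endpoint estimates, but it only runs a Schur test with the trivial weight $w\equiv 1$, which closes in the regime $2ml\geq 1$ (there $\norme{\psi\pare{x}}\int_{x}^{0}\norme{\varphi\pare{y}}dy=O\pare{-x}$ and $\norme{\varphi\pare{x}}\int_{-\infty}^{x}\norme{\psi\pare{y}}dy=O\pare{\pare{-x}^{ml}\pare{-x}^{1-ml}}$ near the origin); for $2ml<1$ it explicitly observes that this unweighted bound fails at $0$, since $\norme{\psi\pare{x}}\int_{x}^{0}\norme{\varphi\pare{y}}dy\sim\pare{-x}^{-ml}$ diverges, and switches to a Cauchy--Schwarz / Hilbert--Schmidt-type estimate on $]-\epsilon,0[$ exploiting that in that regime \emph{both} $\varphi$ and $\psi$ are square integrable near $0$, combined with a factorized-kernel bound on $]-\infty,-\epsilon[$. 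Your weighted Schur test with $w\pare{x}=\pare{-x}^{-ml}$ (resp.\ $\pare{-x}^{ml}$) near $0$ repairs precisely that failure: the inequalities do close, because $\int_{x}^{0}\pare{-y}^{-2ml}dy$ converges exactly when $2ml<1$, while $\pare{-x}^{-ml}\int_{x}^{0}\pare{-y}^{2ml}dy=O\pare{\pare{-x}^{ml+1}}$ for $2ml\geq1$, and the symmetry $k\pare{x,y}=k\pare{y,x}$ of the dominating scalar kernel gives the row estimate from the column estimate for free. What your route buys is a single mechanism covering both mass regimes; what the paper's route buys is that for $2ml<1$ no weight is needed at all, only the $L^{2}$ membership of $\varphi$ and $\psi$ near the origin. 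Your closing diagnosis --- that for $2ml\geq1$ the Jost solution is not $L^{2}$ at $0$ and boundedness is recovered only through its pairing with the decaying solution $\varphi$ --- is exactly the mechanism the paper's case analysis is built around.
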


\begin{proof}
We have seen in \ref{propJost} that, for all $x\in ]-\infty,0[$, we have the estimate:
\begin{equation*}
\norme{\psi\pare{x}} \leq e^{\Im\pare{\lambda} x} e^{\int_{-\infty}^{x} \norme{V_{m}\pare{t}} dt}.
\end{equation*}
Let $\epsilon >0$. We deduce that, for all $x\in ]-\infty,-\epsilon[$, we have:
\begin{equation*}
\norme{\psi\pare{x}} \leq C_{m,\epsilon} e^{\Im\pare{\lambda} x}.
\end{equation*}
Moreover, for all $x \in ]-\epsilon,0[$, we have:
\begin{flalign*}
\norme{\psi\pare{x}} & \leq e^{\Im\pare{\lambda} x} e^{\int_{-\infty}^{-\epsilon} \norme{V_{m}\pare{t}} dt + \int_{-\epsilon}^{x} \norme{V_{m}\pare{t}} dt} \\
& \leq C_{\epsilon,m} e^{\int_{-\epsilon}^{x} \abs{\frac{ml}{t}} dt}.
\end{flalign*}
Since:
\begin{equation*}
\int_{-\epsilon}^{x} \abs{\frac{ml}{t}} dt =- ml \int_{-\epsilon}^{x} \frac{1}{t} dt = -ml \left [\ln \pare{-t} \right]_{-\epsilon}^{x} = \ln \pare{\pare{\frac{\epsilon}{-x}}^{ml}},
\end{equation*}
we deduce that:
\begin{equation*}
\norme{\psi\pare{x}} \leq C_{\epsilon,m} \pare{-x}^{-ml}
\end{equation*}
on $]-\epsilon,0[$. Thus $\psi$ is in $L^{2}\pare{]-\infty,0[}$ for $2ml<1$ (but does not satisfy the boundary conditions, otherwise, it would be an eigenvector). Recall that, for $2ml <1$, we have the following estimates:
\begin{flalign*}
\norme{\varphi\pare{x}} &\leq 4 N \pare{-x}^{-ml} e^{-6 C_{\lambda,m} \pare{s+\frac{1}{2}} x},\\
\norme{\varphi \pare{x}} & \leq C e^{-\Im\pare{\lambda}x},
\end{flalign*}
where the first inequality is taken near $0$ and the second one at $-\infty$. In the case $2ml\geq 1$, we have the estimates:
\begin{flalign*}
\norme{\varphi\pare{x}} &\leq 4 N \pare{-x}^{ml} e^{-6 C_{\lambda,m} \pare{s+\frac{1}{2}} x}, \\
\norme{\varphi \pare{x}} & \leq C e^{-\Im\pare{\lambda}x}.
\end{flalign*}
Recall that:
\begin{flalign*}
R_{m}^{s,n}\pare{x,y,\lambda} & = \pare{\varphi\pare{x} \psi^{t}\pare{y} + \tilde{\varphi}\pare{x} \tilde{\psi}^{t}\pare{y}} M_{\alpha,\beta}^{-1} i\Gamma^{1} \mathds{1}_{]-\infty,x[}\pare{y} \\
& \quad + \pare{\psi\pare{x}\varphi^{t}\pare{y} + \tilde{\psi}\pare{x} \tilde{\varphi}^{t}\pare{y}}M_{\alpha,\beta}^{-1} i\Gamma^{1}\mathds{1}_{]x,0[}\pare{y}.
\end{flalign*}
We are first interested in the case $2ml \geq 1$.\\
We remark that, in this formula, $x$ and $y$ have a symmetrical role. Indeed, $\mathds{1}_{]-\infty,x[}\pare{y} = \mathds{1}_{]y,0[}\pare{x}$ and $\mathds{1}_{]x,0[}\pare{y} = \mathds{1}_{]-\infty,y[}\pare{x}$. We thus concentrate on $\int_{-\infty}^{0} R_{m}^{s,n}\pare{x,y,\lambda} dy$:
\begin{flalign*}
\int_{-\infty}^{0} R_{m}^{s,n}\pare{x,y,\lambda} dy & = \int_{-\infty}^{x} \pare{\varphi\pare{x} \psi^{t}\pare{y} + \tilde{\varphi}\pare{x} \tilde{\psi}^{t}\pare{y}} M_{\alpha,\beta}^{-1} i\Gamma^{1} dy \\
& \quad + \int_{x}^{0}  \pare{\psi\pare{x}\varphi^{t}\pare{y} + \tilde{\psi}\pare{x} \tilde{\varphi}^{t}\pare{y}}M_{\alpha,\beta}^{-1} i\Gamma^{1} dy.
\end{flalign*}
Since $\psi$ is integrable on $]-\infty,x[$ and $\varphi$ is integrable on $]x,0[$, this integral is well-defined and bounded for all $x$ in a compact subset of $]-\infty,0[$. We study the limits as $x$ goes to $-\infty$ and $0$ of the two preceding integrals. We have:
\begin{equation*}
\norme{\int_{-\infty}^{x} \psi^{t} \pare{y} dy}  \leq \int_{-\infty}^{x} 2 e^{\Im\pare{\lambda} y} dy = \frac{2}{\Im\pare{\lambda}}e^{\Im\pare{\lambda} x}.
\end{equation*}
Indeed, $\int_{-\infty}^{x} \norme{V_{m}\pare{t}} dt$ tends to $0$ at $-\infty$. Consequently, $e^{\int_{-\infty}^{x} \norme{V_{m}\pare{t}} dt} \leq 2$ for $x$ going to $-\infty$. Using the estimates on $\varphi$, we deduce that $\int_{-\infty}^{x} \pare{\varphi\pare{x} \psi^{t}\pare{y} + \tilde{\varphi}\pare{x} \tilde{\psi}^{t}\pare{y}} M_{\alpha,\beta}^{-1} i\Gamma^{1} dy$ is bounded at $-\infty$.\\
Let $A >0$ such that $\norme{\varphi \pare{x}} \leq C e^{-\Im\pare{\lambda}x}$ for $x<A$. Then, for all $x<A$, we have:
\begin{flalign*}
\norme{\int_{x}^{0}  \varphi^{t}\pare{y}dy} 
& \leq \int_{x}^{A} C e^{-\Im\pare{\lambda} y} dy + \int_{A}^{0} \norme{\varphi^{t}\pare{y}}dy \\
& = \frac{C}{\Im\pare{\lambda}} \pare{e^{-\Im\pare{\lambda}x} - e^{-\Im\pare{\lambda}A}} + \int_{A}^{0} \norme{\varphi^{t}\pare{y}}dy.
\end{flalign*}
Since $\norme{\psi\pare{x}} \leq e^{\Im\pare{\lambda}x}$ at $-\infty$, we obtain
\begin{equation*}
\int_{x}^{0}  \pare{\psi\pare{x}\varphi^{t}\pare{y} + \tilde{\psi}\pare{x} \tilde{\varphi}^{t}\pare{y}}M_{\alpha,\beta}^{-1} i\Gamma^{1} dy
\end{equation*}
is bounded at $-\infty$.\\
We now look at $0$. We consider $x \in ]-\infty,0[$ sufficiently close to $0$. We have:
\begin{equation*}
\norme{\int_{x}^{0} \varphi^{t}\pare{y} M_{\alpha,\beta}^{-1} i\Gamma^{1} dy}  \leq \int_{x}^{0} C \pare{-y}^{ml} dy 
 = C \left[-\frac{\pare{-y}^{1+ml}}{1+ml} \right]_{x}^{0} = C\pare{-x}^{1+ml}.
\end{equation*}
Since $\norme{\psi \pare{x}} \leq C \pare{-x}^{-ml}$, we obtain:
\begin{equation*}
\int_{x}^{0}  \pare{\psi\pare{x}\varphi^{t}\pare{y} + \tilde{\psi}\pare{x} \tilde{\varphi}^{t}\pare{y}}M_{\alpha,\beta}^{-1} i\Gamma^{1} dy \leq 2C \pare{-x},
\end{equation*}
which is bounded at $0$. Now, let $\epsilon >0$ sufficiently small. We have:
\begin{flalign*}
\norme{\int_{-\infty}^{x} \psi^{t}\pare{y} M_{\alpha,\beta}^{-1} i\Gamma^{1} dy} & \leq \int_{-\infty}^{-\epsilon} \norme{\psi\pare{y}} dy + \int_{-\epsilon}^{x} \norme{\psi\pare{y}} dy \\
& \leq \int_{-\infty}^{-\epsilon} \norme{\psi\pare{y}} dy + \int_{-\epsilon}^{x} C \pare{-y}^{-ml} dy \\
& = \begin{cases}
\int_{-\infty}^{-\epsilon} \norme{\psi\pare{y}} dy + C \pare{\frac{\pare{\epsilon}^{1-ml}}{1-ml} - \frac{\pare{-x}^{1-ml}}{1-ml}}, \hspace{2mm} \text{if} \hspace{2mm} ml \neq 1 \\
\int_{-\infty}^{-\epsilon} \norme{\psi\pare{y}} dy + C \pare{\ln\pare{\epsilon} - \ln \pare{-x}}, \hspace{2mm} \text{if} \hspace{2mm} ml=1
\end{cases}
\end{flalign*}
Since $\norme{\varphi\pare{x}} \leq C \pare{-x}^{ml}$, we deduce that:
\begin{flalign*}
\hphantom{A} &\norme{\int_{-\infty}^{x} \pare{\varphi\pare{x} \psi^{t}\pare{y} + \tilde{\varphi}\pare{x} \tilde{\psi}^{t}\pare{y}} M_{\alpha,\beta}^{-1} i\Gamma^{1} dy} \\
&\leq \begin{cases}
2 \pare{-x}^{ml} \pare{\int_{-\infty}^{-\epsilon} \norme{\psi\pare{y}} dy + C \pare{\frac{\pare{\epsilon}^{1-ml}}{1-ml} - \frac{\pare{-x}^{1-ml}}{1-ml}}}, \hspace{2mm} \text{if} \hspace{2mm} ml \neq 1 \\
2 \pare{-x}^{ml} \pare{\int_{-\infty}^{-\epsilon} \norme{\psi\pare{y}} dy + C \pare{\ln\pare{\epsilon} - \ln \pare{-x}}}, \hspace{2mm} \text{if} \hspace{2mm} ml=1.
\end{cases}
\end{flalign*}
This proves that $\int_{-\infty}^{x} \pare{\varphi\pare{x} \psi^{t}\pare{y} + \tilde{\varphi}\pare{x} \tilde{\psi}^{t}\pare{y}} M_{\alpha,\beta}^{-1} i\Gamma^{1} dy$ is bounded at $0$. We can now apply the Schur's lemma which proves that $R_{m}^{s,n}\pare{\lambda}$ is bounded from $L^{2}\pare{]-\infty,0[}$ into itself.\\
We now study the case $2ml <1$. In this case, $\varphi$ is integrable at $0$ but is not bounded. The preceding argument does not work at $0$. Recall that:
\begin{flalign*}
R_{m}^{s,n}\pare{\lambda}f \pare{x} & = \int_{-\infty}^{x} \pare{\varphi\pare{x} \psi^{t}\pare{y} + \tilde{\varphi}\pare{x} \tilde{\psi}^{t}\pare{y}} M_{\alpha,\beta}^{-1} i\Gamma^{1} f\pare{y} dy \\
& \quad + \int_{x}^{0}  \pare{\psi\pare{x}\varphi^{t}\pare{y} + \tilde{\psi}\pare{x} \tilde{\varphi}^{t}\pare{y}}M_{\alpha,\beta}^{-1} i\Gamma^{1} f\pare{y} dy,
\end{flalign*}
and that $\psi$ is a $L^{2}$ function which does not satisfies the boundary conditions. Let $\epsilon >0$. We calculate:
\begin{flalign*}
\norme{\mathds{1}_{]-\epsilon,0[} R_{m}^{s,n}\pare{\lambda}f}_{\mathcal{H}_{m}^{s,n}}^{2} & =\int_{-\epsilon}^{0} \left \lVert \int_{-\infty}^{x} \pare{\varphi\pare{x} \psi^{t}\pare{y} + \tilde{\varphi}\pare{x} \tilde{\psi}^{t}\pare{y}} M_{\alpha,\beta}^{-1} i\Gamma^{1} f\pare{y} dy \right .\\
& \quad \left . + \int_{x}^{0}  \pare{\psi\pare{x}\varphi^{t}\pare{y} + \tilde{\psi}\pare{x} \tilde{\varphi}^{t}\pare{y}}M_{\alpha,\beta}^{-1} i\Gamma^{1} f\pare{y} dy \right \rVert ^{2} dx \\
& \leq 4 \left ( \int_{-\epsilon}^{0} \left ( \int_{-\infty}^{x} \norme{\varphi\pare{x}} \norme{\psi^{t}\pare{y}} \norme{ f\pare{y}} dy \right)^{2} \right . \\
& \quad \left . + \pare{\int_{x}^{0} \norme{\psi\pare{x}} \norme{ \varphi^{t}\pare{y}} \norme{ f\pare{y}} dy}^{2} dx \right) \\
& \leq 8\norme{\varphi}_{\left [L^{2}\pare{]-\epsilon,0[}\right ]^{4}}^{2} \norme{\psi}_{\mathcal{H}_{m}^{s,n}}^{2} \norme{f}_{\mathcal{H}_{m}^{s,n}}^{2}
\end{flalign*}
using the Cauchy-Schwarz inequality. Moreover, we have:
\begin{flalign*}
\norme{\mathds{1}_{]-\infty,-\epsilon[} R_{m}^{s,n}\pare{\lambda} f}_{L^{2}\pare{]-\infty,0[}}^{2} & \leq 4 \left ( \int_{-\infty}^{-\epsilon} \left ( \int_{-\infty}^{x} \norme{\varphi\pare{x}} \norme{\psi^{t}\pare{y}} \norme{ f\pare{y}} dy \right)^{2} \right . \\
& \quad \left . + \pare{\int_{x}^{0} \norme{\psi\pare{x}} \norme{ \varphi^{t}\pare{y}} \norme{ f\pare{y}} dy}^{2} dx \right).
\end{flalign*} 
We study the first term of the last sum:
\begin{flalign*}
\hphantom{A} & \int_{-\infty}^{-\epsilon} \left ( \int_{-\infty}^{x} \norme{\varphi\pare{x}} \norme{\psi^{t}\pare{y}} \norme{ f\pare{y}} dy  \right)^{2} dx  \\
& \leq \int_{-\infty}^{-\epsilon} \pare{\int_{-\infty}^{x} \norme{\varphi\pare{x}} \norme{\psi\pare{y}} dy } \pare{\int_{-\infty}^{x} \norme{\varphi\pare{x}}\norme{\psi\pare{y}} \norme{f\pare{y}}^{2} dy}dx
\end{flalign*}
As in the case $2ml \geq 1$, we can show that $\int_{-\infty}^{x} \norme{\varphi\pare{x}} \norme{\psi\pare{y}} dy $ is bounded on $]-\infty,-\epsilon[$. Furthermore:
\begin{flalign*}
\hphantom{A} & \int_{-\infty}^{-\epsilon} \int_{-\infty}^{x} \norme{\varphi\pare{x}}\norme{\psi\pare{y}} \norme{f\pare{y}}^{2} dy dx \\
& = \int_{-\infty}^{0} \mathds{1}_{]-\infty,-\epsilon[}\pare{y} \norme{f\pare{y}}^{2} \norme{\psi\pare{y}} \pare{\int_{y}^{0} \norme{\varphi\pare{x}}dx } dy
\end{flalign*}
Since $\varphi$ is integrable at $0$, $\norme{\psi\pare{y}} \pare{\int_{y}^{0} \norme{\varphi\pare{x}}dx }$ is bounded at $-\epsilon$. Thanks to the decay of $\psi$ at $-\infty$, we can show, as in the case $2ml \geq 1$, that this term is bounded at $-\infty$. We obtain:
\begin{equation*}
\int_{-\infty}^{-\epsilon} \left ( \int_{-\infty}^{x} \norme{\varphi\pare{x}} \norme{\psi^{t}\pare{y}} \norme{ f\pare{y}} dy \right)^{2} \leq C_{\epsilon} \norme{f}_{L^{2}\pare{]-\infty,0[}}^{2}.
\end{equation*}
We are now interested in the second term and we have:
\begin{flalign*}
\int_{-\infty}^{-\epsilon} \pare{\int_{x}^{0} \norme{\psi\pare{x}} \norme{ \varphi^{t}\pare{y}} \norme{ f\pare{y}} dy}^{2} & \leq 2\left ( \int_{-\infty}^{-\epsilon} \pare{\int_{x}^{-\epsilon} \norme{\psi\pare{x}}\norme{\varphi\pare{y}}\norme{f\pare{y}} dy}^{2} \right . \\
& \quad \left . + \pare{\int_{-\epsilon}^{0} \norme{\psi\pare{x}}\norme{\varphi\pare{y}}\norme{f\pare{y}} dy}^{2}dx \right).
\end{flalign*}
The second term is bounded by:
\begin{equation*}
\int_{-\infty}^{-\epsilon} \pare{\int_{-\epsilon}^{0} \norme{\psi\pare{x}}\norme{\varphi\pare{y}}\norme{f\pare{y}} dy}^{2} \leq \norme{\psi}_{\mathcal{H}_{m}^{s,n}}^{2} \norme{\varphi}_{\left [ L^{2}\pare{]-\epsilon,0[} \right ]^{4}}^{2} \norme{f}_{\mathcal{H}_{m}^{s,n}}^{2}.
\end{equation*}
We also have:
\begin{flalign*}
\hphantom{A} & \int_{-\infty}^{-\epsilon} \pare{\int_{x}^{-\epsilon} \norme{\psi\pare{x}}\norme{\varphi\pare{y}}\norme{f\pare{y}} dy}^{2} \\
& \leq \int_{-\infty}^{-\epsilon} \pare{\int_{x}^{-\epsilon} \norme{\psi\pare{x}} \norme{\varphi\pare{y}} dy} \pare{\int_{x}^{-\epsilon} \norme{\psi\pare{x}}\norme{\varphi\pare{y}}\norme{f\pare{y}}^{2} dy} dx.
\end{flalign*}
On $]-\infty,-\epsilon[$, $\int_{x}^{-\epsilon} \norme{\psi\pare{x}} \norme{\varphi\pare{y}} dy$ is bounded. We have to find a bound on:
\begin{equation*}
 \int_{-\infty}^{-\epsilon} \int_{x}^{-\epsilon} \norme{\psi\pare{x}}\norme{\varphi\pare{y}}\norme{f\pare{y}}^{2} dy dx  = \int_{-\infty}^{-\epsilon} \norme{f\pare{y}}^{2} \norme{\varphi\pare{y}} \pare{\int_{-\infty}^{y} \norme{\psi\pare{x}}dx} dy.
\end{equation*}
As before, $\norme{\varphi\pare{y}} \pare{\int_{-\infty}^{y} \norme{\psi\pare{x}}dx}$ is bounded on $]-\infty,-\epsilon[$. We obtain:
\begin{equation*}
\int_{-\infty}^{-\epsilon} \pare{\int_{x}^{0} \norme{\psi\pare{x}} \norme{ \varphi^{t}\pare{y}} \norme{ f\pare{y}} dy}^{2} \leq C_{\epsilon} \norme{f}_{\mathcal{H}_{m}^{s,n}}^{2}.
\end{equation*}
Consequently:
\begin{equation*}
\norme{\mathds{1}_{]-\infty,-\epsilon[} R_{m}^{s,n}\pare{\lambda} f}_{\mathcal{H}_{m}^{s,n}}^{2} \leq C_{\epsilon} \norme{f}_{\mathcal{H}_{m}^{s,n}}^{2}.
\end{equation*}
We deduce that $R_{m}^{s,n}\pare{\lambda}$ is a bounded operator on $ L^{2}\pare{]-\infty,0[}$.

\end{proof}
Moreover, we can show that the boundary conditions are satisfied:
\begin{lem}
For all $f\in L^{2}\pare{]-\infty,0[}$, all $\lambda \in \C$ such that $\Im\pare{\lambda} >0$, we have:
\begin{enumerate}
\item[-] If $2ml \geq 1$, then $R_{m}^{s,n}\pare{\lambda}f \pare{x}$ tends to $0$ at $0$.
\item[-] If $2ml<\frac{1}{2}$, then $\norme{\pare{\gamma^{1}+i} R_{m}^{s,n}\pare{\lambda}f\pare{x}} = O\pare{\sqrt{-x}}$ at $0$.
\end{enumerate}
\end{lem}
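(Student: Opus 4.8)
The plan is to write $R_{m}^{s,n}(\lambda)f(x)$ as a sum of four terms and estimate each near $x=0$. Writing $R_{m}^{s,n}(\lambda)f(x)=\varphi(x)\,a(x)+\tilde{\varphi}(x)\,\tilde{a}(x)+\psi(x)\,b(x)+\tilde{\psi}(x)\,\tilde{b}(x)$ with the scalar functions $a(x)=\int_{-\infty}^{x}\psi^{t}(y)M_{\alpha,\beta}^{-1}i\Gamma^{1}f(y)\,dy$, $b(x)=\int_{x}^{0}\varphi^{t}(y)M_{\alpha,\beta}^{-1}i\Gamma^{1}f(y)\,dy$, and $\tilde{a},\tilde{b}$ the same expressions with $\psi,\varphi$ replaced by $\tilde{\psi},\tilde{\varphi}$, I note that $\tilde{\psi}=(-i)\gamma^{0}\gamma^{1}\gamma^{2}\psi=(-\psi_{4},\psi_{3},\psi_{2},-\psi_{1})^{t}$ and likewise for $\tilde{\varphi}$, so $\norme{\tilde{\psi}(x)}=\norme{\psi(x)}$ and $\norme{\tilde{\varphi}(x)}=\norme{\varphi(x)}$ pointwise; it therefore suffices to treat the $\varphi$- and $\psi$-terms. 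Throughout I use the near-$0$ bounds already obtained: $\norme{\varphi(x)}\leq C(-x)^{ml}$ if $2ml\geq1$, $\norme{\varphi(x)}\leq C(-x)^{-ml}$ if $2ml<1$, and $\norme{\psi(x)}\leq C(-x)^{-ml}$ (this last from the proof of Lemma~\ref{BDEDRES}), with $C$ independent of $x$ and $f$.

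For $2ml\geq1$ the goal is that each of the four terms tends to $0$. For $\varphi(x)a(x)$, fix $\epsilon>0$ and split $a(x)$ over $]-\infty,-\epsilon[$ and $]-\epsilon,x[$: on the first interval $\psi$ is bounded and exponentially small at $-\infty$, so its contribution is $\leq C_{\epsilon}\norme{f}_{L^{2}}$; on the second, Cauchy--Schwarz with $\norme{\psi(y)}\leq C(-y)^{-ml}$ gives a bound $C(-x)^{(1-2ml)/2}\norme{f}_{L^{2}}$ (with $\abs{\ln(-x)}^{1/2}$ in place of the power when $2ml=1$). Multiplying by $\norme{\varphi(x)}\leq C(-x)^{ml}$ gives $\norme{\varphi(x)a(x)}\leq C\pare{(-x)^{ml}+(-x)^{1/2}}\norme{f}_{L^{2}}\to0$. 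For $\psi(x)b(x)$, Cauchy--Schwarz with $\norme{\varphi(y)}\leq C(-y)^{ml}$ gives $\norme{b(x)}\leq\norme{\varphi}_{L^{2}\pare{]x,0[}}\norme{f}_{L^{2}\pare{]x,0[}}\leq C(-x)^{ml+1/2}\norme{f}_{L^{2}}$, hence $\norme{\psi(x)b(x)}\leq C(-x)^{1/2}\norme{f}_{L^{2}}\to0$. The tilde terms are identical, so $R_{m}^{s,n}(\lambda)f(x)\to0$ at $0$.

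For $2ml<\frac{1}{2}$ the argument is subtler. Since $2ml<1$, both $\psi$ and $\tilde{\psi}$ lie in $L^{2}$ near $0$, so $a(x)$ and $\tilde{a}(x)$ stay bounded by $C\norme{f}_{L^{2}}$; and since $\varphi$ was constructed with $\norme{(\gamma^{1}+i)\varphi(x)}=o\pare{\sqrt{-x}}$ and $(\gamma^{1}+i)\tilde{\varphi}$ involves only the combinations $\varphi_{1}+\varphi_{3}$ and $\varphi_{2}-\varphi_{4}$ (read off from the form of $\tilde{\varphi}$), the same bound holds for $\tilde{\varphi}$; hence $(\gamma^{1}+i)\pare{\varphi(x)a(x)+\tilde{\varphi}(x)\tilde{a}(x)}=o\pare{\sqrt{-x}}$. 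For the last two terms the crucial point is that $\gamma^{1}+i$ annihilates the singular mode $(-x)^{-ml}$ at $0$: writing $\psi(x)=\mathcal{M}_{0}(x)u(x)$ with $u(x)=\mathcal{M}_{0}(1/x)\psi(x)$ (as in the proof of Proposition~\ref{BCS1}), $u$ solves $\partial_{x}u=\mathcal{M}_{0}(1/x)V_{\lambda,m}(x)\mathcal{M}_{0}(x)u$ with coefficient matrix of size $O((-x)^{-2ml})$, which is integrable near $0$ because $2ml<1$; so $u$ has a finite limit $u(0)$ with $\norme{u(x)-u(0)}\leq C(-x)^{1-2ml}$. Since $\gamma^{1}+i=P\begin{pmatrix}2iI_{2}&0\\0&0\end{pmatrix}P^{-1}$ and $\mathcal{M}_{0}(x)=P\begin{pmatrix}(-x)^{ml}I_{2}&0\\0&(-x)^{-ml}I_{2}\end{pmatrix}P^{-1}$, one has $(\gamma^{1}+i)\mathcal{M}_{0}(x)=P\begin{pmatrix}2i(-x)^{ml}I_{2}&0\\0&0\end{pmatrix}P^{-1}$, so $\norme{(\gamma^{1}+i)\psi(x)}\leq C(-x)^{ml}\pare{\norme{u(0)}+(-x)^{1-2ml}}\leq C(-x)^{ml}$ (as $1-ml>ml$), and the same for $\tilde{\psi}$. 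Together with $\norme{b(x)}\leq\norme{\varphi}_{L^{2}\pare{]x,0[}}\norme{f}_{L^{2}}\leq C(-x)^{(1-2ml)/2}\norme{f}_{L^{2}}$, this yields $\norme{(\gamma^{1}+i)\pare{\psi(x)b(x)+\tilde{\psi}(x)\tilde{b}(x)}}\leq C(-x)^{ml}(-x)^{(1-2ml)/2}\norme{f}_{L^{2}}=C(-x)^{1/2}\norme{f}_{L^{2}}$. Adding the two contributions gives $\norme{(\gamma^{1}+i)R_{m}^{s,n}(\lambda)f(x)}=O\pare{\sqrt{-x}}$.

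The main obstacle is precisely the bound $\norme{(\gamma^{1}+i)\psi(x)}=O((-x)^{ml})$: the naive estimate $\norme{\psi(x)}=O((-x)^{-ml})$ would only give $O((-x)^{1/2-2ml})$ for the last two terms, which is too large. The gain is algebraic --- $\gamma^{1}+i$ is twice the spectral projection of $\gamma^{1}$ onto the eigenvalue $i$, and the $(-x)^{-ml}$-block of $\mathcal{M}_{0}(x)$ sits in the complementary eigenspace, hence is killed --- but exploiting it requires controlling the remainder $\psi(x)-\mathcal{M}_{0}(x)u(0)$, i.e.\ showing $\norme{u(x)-u(0)}=O((-x)^{1-2ml})$, which is exactly where the hypothesis $2ml<1$ is used (integrability of the coefficient matrix at $0$). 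The remaining points --- that $\tilde{\varphi}$ inherits the boundary condition of $\varphi$, and that $\psi,\tilde{\psi}$ are square-integrable near $0$ --- are routine consequences of the explicit form of $(-i)\gamma^{0}\gamma^{1}\gamma^{2}$ and of $\norme{\psi(x)}\leq C(-x)^{-ml}$.
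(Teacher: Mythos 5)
Your proof is correct and follows the same overall structure as the paper's: the same splitting of $R_{m}^{s,n}\pare{\lambda}f$ into $\varphi$-terms and $\psi$-terms with scalar integral coefficients, the same Cauchy--Schwarz estimates near $0$ in the case $2ml\geq 1$, and, in the case $2ml<\frac{1}{2}$, the same key point that $\norme{\pare{\gamma^{1}+i}\psi\pare{x}}=O\pare{\pare{-x}^{ml}}$ rather than the naive $O\pare{\pare{-x}^{-ml}}$, which is what rescues the $\psi\pare{x}b\pare{x}$ term. The one genuine difference is how that key bound is obtained. The paper quotes the asymptotic development near $0$ of elements of $D_{nat}\pare{H_{m}^{s,n}}$ from Theorem $3.1$ of \cite{IR14}; you instead rederive it inside this paper by writing $\psi\pare{x}=\mathcal{M}_{0}\pare{x}u\pare{x}$ with $u=\mathcal{M}_{0}\pare{\frac{1}{x}}\psi$, showing that $u$ is bounded with a limit at $0$ (a Gronwall step, which you leave implicit, plus integrability of the coefficient matrix $\mathcal{M}_{0}\pare{\frac{1}{x}}V_{\lambda,m}\mathcal{M}_{0}\pare{x}=O\pare{\pare{-x}^{-2ml}}$, which is exactly where $2ml<1$ enters), and observing that $\gamma^{1}+i$ is twice the spectral projection of $\gamma^{1}$ onto the eigenvalue $i$, hence annihilates the $\pare{-x}^{-ml}$ block of $\mathcal{M}_{0}\pare{x}$. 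Your route is self-contained, reusing only the $P$, $\mathcal{M}_{0}$ machinery already set up in the proof of Proposition \ref{BCS1}, and it makes the role of the hypothesis on $ml$ explicit; the paper's route is shorter but leans on the external reference. The remaining ingredients --- $\norme{\tilde{\psi}}=\norme{\psi}$, the fact that $\tilde{\varphi}$ inherits the boundary condition because $\tilde{\varphi}_{1}+\tilde{\varphi}_{3}=\varphi_{2}-\varphi_{4}$ and $\tilde{\varphi}_{2}-\tilde{\varphi}_{4}=\varphi_{1}+\varphi_{3}$, and the boundedness of $a\pare{x}$ via $\psi\in L^{2}$ near $0$ --- coincide with what the paper uses.
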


\begin{proof}
Recall that:
\begin{flalign*}
R_{m}^{s,n}\pare{\lambda}f \pare{x} & = \int_{-\infty}^{x} \pare{\varphi\pare{x} \psi^{t}\pare{y} + \tilde{\varphi}\pare{x} \tilde{\psi}^{t}\pare{y}} M_{\alpha,\beta}^{-1} i\Gamma^{1} f\pare{y} dy \\
& \quad + \int_{x}^{0}  \pare{\psi\pare{x}\varphi^{t}\pare{y} + \tilde{\psi}\pare{x} \tilde{\varphi}^{t}\pare{y}}M_{\alpha,\beta}^{-1} i\Gamma^{1} f\pare{y} dy.
\end{flalign*}
In the case $2ml \geq 1$, when $x$ goes to $0$, we have:
\begin{flalign*}
\norme{\int_{x}^{0}  \pare{\psi\pare{x}\varphi^{t}\pare{y} + \tilde{\psi}\pare{x} \tilde{\varphi}^{t}\pare{y}}M_{\alpha,\beta}^{-1} i\Gamma^{1} f\pare{y} dy} & \leq 2 \norme{\psi\pare{x}} \norme{\varphi}_{\left [ L^{2}\pare{]x,0[} \right ]^{4}} \norme{f}_{\mathcal{H}_{m}^{s,n}} \\
& \leq C \pare{-x}^{-ml} \pare{-x}^{\frac{1}{2}+ ml} \norme{f}_{\mathcal{H}_{m}^{s,n}}.
\end{flalign*}
Indeed:
\begin{equation*}
\int_{x}^{0}\norme{\varphi\pare{y}}^{2} dy \leq C \left [ -\frac{ \pare{-x}^{1+2ml}}{1+2ml} \right ].
\end{equation*}
We deduce that the second term in the expression of $R_{m}^{s,n}\pare{\lambda}f $ goes to $0$. Moreover, let $\epsilon >0$, we have:
\begin{flalign*}
\norme{\int_{-\infty}^{x} \pare{\varphi\pare{x} \psi^{t}\pare{y} + \tilde{\varphi}\pare{x} \tilde{\psi}^{t}\pare{y}} M_{\alpha,\beta}^{-1} i\Gamma^{1} f\pare{y} dy} & \leq 2 \norme{\varphi\pare{x}} \left (\int_{-\infty}^{-\epsilon} \norme{\psi\pare{y}} \norme{f\pare{y}} dy \right . \\
& \quad \left . + \int_{-\epsilon}^{x} \norme{\psi\pare{y}} \norme{f\pare{y}} dy \right ) \\
& \leq 2 C \pare{-x}^{ml} \left ( \norme{\psi}_{\left [ L^{2}\pare{]-\infty,-\epsilon[} \right ]^{4}} \norme{f}_{\mathcal{H}_{m}^{s,n}} \right . \\
& \quad \left . + \pare{\left [ - \frac{\pare{-y}^{1-2ml}}{1-2ml} \right ]_{-\epsilon}^{x}}^{\frac{1}{2}} \norme{f}_{\mathcal{H}_{m}^{s,n}} \right )
\end{flalign*}
when $2ml \neq 1$. The term on the right hand side goes to $0$. In the case $2ml = 1$, we replace $\frac{\pare{-y}^{1-2ml}}{1-2ml}$ by $\ln\pare{-y}$ and we have the same result.\\
In the case $2ml<1$, by the Cauchy-Schwarz inequality, we have:
\begin{equation*}
\norme{\int_{-\infty}^{x} \psi^{t}\pare{y}f\pare{y} dy}  \leq \norme{\psi}_{L^{2}\pare{]-\infty,x[}}\norme{f}_{L^{2}\pare{]-\infty,0[}}.
\end{equation*}
Using the behavior at $0$ of $\psi$, we have:
\begin{equation*}
\norme{\psi}_{L^{2}\pare{]-\infty,x[}} 
 \leq \norme{\psi}_{L^{2}\pare{]-\infty,-\epsilon[}} + \frac{1}{\pare{1-2ml}^{\frac{1}{2}}}\pare{\pare{-x}^{\frac{1}{2}-ml} + \pare{-\epsilon}^{\frac{1}{2} -ml}}.
\end{equation*}
Since $\varphi$ satisfies the boundary conditions $\norme{\pare{\gamma^{1}+i}\varphi\pare{x}} = O\pare{\pare{-x}^{\frac{1}{2}}}$, and $\frac{1}{2} - ml >0$, we deduce that:
\begin{equation*}
\norme{\pare{\gamma^{1}+i}\varphi\pare{x} \int_{-\infty}^{x} \psi^{t}\pare{y} f\pare{y} dy} = O\pare{\pare{-x}^{\frac{1}{2}}}.
\end{equation*}
For the second term, $\psi$ is in $\mathcal{H}_{m}^{s,n}$ and satisfies:
\begin{equation*}
H_{m}^{s,n} \psi = \lambda \psi.
\end{equation*}
Hence, $\psi \in D_{nat}\pare{H_{m}^{s,n}} = \{ \phi \in \mathcal{H}_{m}^{s,n} \lvert \hspace{2mm} H_{m}^{s,n}\varphi \in \mathcal{H}_{m}^{s,n} \}$. Using the development of $\psi$ near zero obtained in theorem $3.1$ of \cite{IR14}, we can calculate $\pare{\gamma^{1}+i}\psi$ and we obtain $\pare{\gamma^{1}+i}\psi = O\pare{\pare{-x}^{ml}}$. This gives:
\begin{equation*}
\norme{\pare{\gamma^{1}+i}\psi\pare{x}\int_{x}^{0} \varphi^{t}\pare{y} f\pare{y} dy} \leq C \pare{-x}^{\frac{1}{2}}\norme{f}_{L^{2}\pare{]x,0[}}.
\end{equation*}
Thus $\norme{\pare{\gamma^{1}+i}R_{m}^{s,n}\pare{\lambda}f\pare{x}} = O\pare{\pare{-x}^{\frac{1}{2}}}$ and the boundary conditions are satisfied.

\end{proof}
We can now prove the first part of theorem \ref{MainTheo} in the:
\begin{prop}
For all $\lambda \in \C$ such that $\Im\pare{\lambda} >0$, we have:
\begin{equation*}
\pare{H_{m}^{s,n} - \lambda}^{-1} = R_{m}^{s,n}\pare{\lambda}.
\end{equation*}
\end{prop}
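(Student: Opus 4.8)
The strategy is to verify that $R_{m}^{s,n}\pare{\lambda}$ maps $\mathcal{H}_{s,n}$ into $D\pare{H_{m}^{s,n}}$ and is a right inverse of $H_{m}^{s,n}-\lambda$, and then to invoke the self-adjointness of $H_{m}^{s,n}$ (established in \cite{IR14}) to conclude that this right inverse must be the resolvent, since $\Im\pare{\lambda}>0$ forces $\lambda\in\rho\pare{H_{m}^{s,n}}$.

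First I would rewrite $R_{m}^{s,n}\pare{\lambda}f = T_{m}\pare{\lambda}\pare{M_{\alpha,\beta}^{-1}i\Gamma^{1} f}$, where $T_{m}\pare{\lambda}$ is the operator whose kernel $T_{m}\pare{x,y,\lambda}$ appeared in the proof of the lemma on $M_{\alpha,\beta}$. Differentiating $T_{m}\pare{x,y,\lambda}$ in $x$ and applying $H_{m}^{s,n}$, the terms coming from the $x$-dependence of $\varphi,\psi,\tilde{\varphi},\tilde{\psi}$ reproduce $\lambda\,T_{m}\pare{\lambda}g$ because these four functions are generalized eigenvectors of $H_{m}^{s,n}$ for the eigenvalue $\lambda$, while the terms coming from the moving endpoint $y=x$ produce the jump of the kernel across the diagonal; exactly as in that earlier computation (which never used $\alpha=\beta$), this jump equals $-i\Gamma^{1}M_{\alpha,\beta}\,g$. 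Hence, for every $f\in\mathcal{H}_{s,n}$,
\[
H_{m}^{s,n}\pare{R_{m}^{s,n}\pare{\lambda}f} = \lambda\,R_{m}^{s,n}\pare{\lambda}f - i\Gamma^{1}M_{\alpha,\beta}M_{\alpha,\beta}^{-1}i\Gamma^{1}f = \lambda\,R_{m}^{s,n}\pare{\lambda}f + f,
\]
using $\pare{i\Gamma^{1}}^{2}=-I$. This yields at once $\pare{H_{m}^{s,n}-\lambda}R_{m}^{s,n}\pare{\lambda}f = f$ together with $H_{m}^{s,n}\pare{R_{m}^{s,n}\pare{\lambda}f}\in\mathcal{H}_{s,n}$.

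Next I would check that $R_{m}^{s,n}\pare{\lambda}f\in D\pare{H_{m}^{s,n}}$: boundedness of $R_{m}^{s,n}\pare{\lambda}$ on $\mathcal{H}_{s,n}$ is Lemma \ref{BDEDRES}, the condition $H_{m}^{s,n}\pare{R_{m}^{s,n}\pare{\lambda}f}\in\mathcal{H}_{s,n}$ was just obtained, and the behaviour at $x=0$ demanded by the domain is the content of the last lemma above (for $2ml\geq1$ the function tends to $0$ at $0$; for $2ml<1$, since $R_{m}^{s,n}\pare{\lambda}f$ already lies in the maximal domain, the asymptotic expansion near $0$ of elements of that domain given in \cite{IR14} upgrades the estimate $\norme{\pare{\gamma^{1}+i}R_{m}^{s,n}\pare{\lambda}f} = O\pare{\sqrt{-x}}$ to the required $o\pare{\sqrt{-x}}$, since that expansion contains no term of exact order $\pare{-x}^{1/2}$). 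Therefore $R_{m}^{s,n}\pare{\lambda}$ is bounded from $\mathcal{H}_{s,n}$ into $D\pare{H_{m}^{s,n}}$ and $\pare{H_{m}^{s,n}-\lambda}R_{m}^{s,n}\pare{\lambda}=\mathrm{Id}$.

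Finally, since $H_{m}^{s,n}$ is self-adjoint, $\Im\pare{\lambda}>0$ gives $\lambda\notin\sigma\pare{H_{m}^{s,n}}$, so $H_{m}^{s,n}-\lambda\colon D\pare{H_{m}^{s,n}}\to\mathcal{H}_{s,n}$ is a bijection with bounded inverse; composing $\pare{H_{m}^{s,n}-\lambda}R_{m}^{s,n}\pare{\lambda}f=f$ on the left with $\pare{H_{m}^{s,n}-\lambda}^{-1}$ gives $R_{m}^{s,n}\pare{\lambda}f=\pare{H_{m}^{s,n}-\lambda}^{-1}f$ for all $f$, which is the claim. The one substantive step is the kernel differentiation that isolates the jump term $-i\Gamma^{1}M_{\alpha,\beta}$, but this repeats verbatim the computation already carried out in the lemma on $M_{\alpha,\beta}$; everything else is bookkeeping with the decay estimates of Sections \ref{JostSol} and \ref{BoundSol}.
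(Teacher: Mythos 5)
Your proof is correct and follows essentially the same route as the paper: both hinge on the identity $H_{m}^{s,n}\pare{R_{m}^{s,n}\pare{\lambda}f}=\lambda R_{m}^{s,n}\pare{\lambda}f+f$, where the $\lambda$-term comes from $\varphi,\psi,\tilde{\varphi},\tilde{\psi}$ being generalized eigenvectors and the inhomogeneous term from the jump $M_{\alpha,\beta}$ of the kernel across the diagonal, cancelled by the factor $M_{\alpha,\beta}^{-1}i\Gamma^{1}$. The only difference is cosmetic and lies in the last step: you pass from the right-inverse identity to the resolvent via injectivity of $H_{m}^{s,n}-\lambda$ (from self-adjointness and $\Im\pare{\lambda}>0$), whereas the paper invokes an ``adjoint type argument'' to produce a left inverse; both are standard, and your explicit check that $R_{m}^{s,n}\pare{\lambda}f$ lies in $D\pare{H_{m}^{s,n}}$ is the same bookkeeping the paper delegates to its preceding lemmas.
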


\begin{proof}
Recall the relations satisfied by the Dirac matrices:
\begin{equation*}
\gamma^{0^{*}} = \gamma^{0}; \hspace{3mm} \gamma^{j^{*}} = -\gamma^{j},\hspace{3mm} 1\leq j \leq 3;\hspace{3mm} \gamma^{\mu} \gamma^{\nu} + \gamma^{\nu} \gamma^{\mu} = 2 g^{\mu \nu}\mathbf{1},\hspace{3mm} 0\leq \mu, \nu \leq 3.
\end{equation*}
We remark that $\pare{-i} \gamma^{0}\gamma^{1} \gamma^{2}$ commute with $\Gamma^{1}$, $\gamma^{0}\gamma^{2}$ and $\gamma^{0}$ where:
\begin{equation*}
\pare{-i}\gamma^{0} \gamma^{1} \gamma^{2} = \begin{pmatrix}
0 & 0 & 0 & -1 \\
0 & 0 & 1 & 0 \\
0 & 1 & 0 & 0 \\
-1 & 0 & 0 & 0
\end{pmatrix}
\end{equation*}
Consequently, if $\chi$ is such that $H_{m}^{s,n} \chi = \lambda \chi$, then $H_{m}^{s,n} \pare{-i} \gamma^{0} \gamma^{1} \gamma^{2} \chi = \lambda \pare{-i}\gamma^{0} \gamma^{1} \gamma^{2} \chi$. Moreover, $\pare{-i}\gamma^{0} \gamma^{1} \gamma^{2}\chi$ satisfies the boundary conditions if $\chi$ does.\\
Let $\psi = \begin{pmatrix}
\psi_{1} \\
\psi_{2} \\
\psi_{3} \\
\psi_{4}
\end{pmatrix}$ and $\varphi = \begin{pmatrix}
\varphi_{1} \\
\varphi_{2} \\
\varphi_{3} \\
\varphi_{4}
\end{pmatrix}$. Then we have:
\begin{equation*}
\varphi \psi^{t} = \begin{pmatrix}
\varphi_{1} \\
\varphi_{2} \\
\varphi_{3} \\
\varphi_{4}
\end{pmatrix} \begin{pmatrix}
\psi_{1} & \psi_{2} & \psi_{3} & \psi_{4}
\end{pmatrix}  = \begin{pmatrix}
\varphi_{1} \psi_{1} & \varphi_{1} \psi_{2} & \varphi_{1} \psi_{3} & \varphi_{1} \psi_{4} \\
\varphi_{2} \psi_{1} & \varphi_{2} \psi_{2} & \varphi_{2} \psi_{3} & \varphi_{2} \psi_{4} \\
\varphi_{3} \psi_{1} & \varphi_{3} \psi_{2} & \varphi_{3} \psi_{3} & \varphi_{3} \psi_{4} \\
\varphi_{4} \psi_{1} & \varphi_{4} \psi_{2} & \varphi_{4} \psi_{3} & \varphi_{4} \psi_{4}
\end{pmatrix}
\end{equation*}
and similar expressions for $\tilde{\varphi} \tilde{\psi}^{t}$, $\psi \varphi^{t}$ and $\tilde{\psi} \tilde{\varphi}^{t}$.
We obtain:
\begin{equation*}
\varphi \psi^{t} - \psi \varphi^{t} + \tilde{\varphi} \tilde{\psi}^{t} - \tilde{\psi} \tilde{\varphi}^{t} = M_{\alpha,\beta}.
\end{equation*}
We can now express the kernel of our resolvent:
\begin{flalign*}
R_{m}^{s,n}\pare{x,y,\lambda} & = \pare{\varphi\pare{x} \psi^{t}\pare{y} + \tilde{\varphi}\pare{x} \tilde{\psi}^{t}\pare{y}} M_{\alpha,\beta}^{-1} i\Gamma^{1} \mathds{1}_{]-\infty,x[}\pare{y} \\
& \quad + \pare{\psi\pare{x}\varphi^{t}\pare{y} + \tilde{\psi}\pare{x} \tilde{\varphi}^{t}\pare{y}}M_{\alpha,\beta}^{-1} i\Gamma^{1}\mathds{1}_{]x,0[}\pare{y}.
\end{flalign*}
The corresponding operator is given by the following formula:
\begin{equation*}
R_{m}^{s,n}\pare{\lambda}f \pare{x} = \int_{-\infty}^{0} R_{m}^{s,n}\pare{x,y,\lambda}f\pare{y} dy
\end{equation*}
which is well-defined since $\psi$, $\tilde{\psi}$ are exponentially decreasing at $-\infty$ and $\varphi$, $\tilde{\varphi}$ are square integrable near $0$ for any positive value of the mass $m$. Using that $\varphi$, $\psi$, $\tilde{\varphi}$ and $\tilde{\psi}$ satisfy $H_{m}^{s,n} \varphi = \lambda \varphi$, $H_{m}^{s,n}\psi= \lambda \psi$, $H_{m}^{s,n} \tilde{\varphi} = \lambda \tilde{\varphi}$ and $H_{m}^{s,n} \tilde{\psi} = \lambda \tilde{\psi}$, we can calculate:
\begin{equation*}
H_{m}^{s,n} \pare{R_{m}^{s,n}\pare{\lambda}f} \pare{x}  = \lambda R_{m}^{s,n}\pare{\lambda}f\pare{x} + f\pare{x}.
\end{equation*}
for $f\in \left [L^{2}\pare{]-\infty,0[} \right ]^{4}$. Indeed, the first term on the right comes from applying $H_{m}^{s,n}$ to the function $\phi$, $\tilde{\phi}$, $\psi$ and $\tilde{\psi}$ while the second term comes from the differentiation of the integrals. Finally, we have:
\begin{equation*}
\pare{H_{m}^{s,n} - \lambda} \pare{ R_{m}^{s,n}\pare{\lambda} f}\pare{x} = f\pare{x}.
\end{equation*}
Thanks to an adjoint type argument, we also obtain a left inverse which shows the proposition.
\end{proof}

\section{Meromorphic extension of the resolvent}

Let $f_{\epsilon}\pare{x} = e^{\epsilon x}$. In this section, we want to prove the second part of theorem \ref{MainTheo}:
\begin{prop}
The operator $f_{\epsilon} \pare{H_{m}^{s,n} - \lambda}^{-1} f_{\epsilon}$ defined for $\verb?Im? \pare{\lambda} >0$ extends meromorphically to $\{\lambda \in \C \hspace{1mm} \vert \hspace{1mm} \verb?Im? \pare{\lambda} > - \epsilon \}$ for all $0< \epsilon < \frac{\kappa}{2}$ where $\kappa$ is the surface gravity. The poles of this meromorphic extension are called resonances.
\end{prop}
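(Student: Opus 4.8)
The plan is to start from the explicit kernel $R_{m}^{s,n}\pare{x,y,\lambda}$ of Theorem \ref{MainTheo} i), to check that each of its ingredients continues analytically (resp. meromorphically) in $\lambda$ across the real axis, and then to show that, after conjugation by $f_{\epsilon}$, the associated integral operator stays bounded, with norm bounds that are locally uniform in $\lambda$; this produces a holomorphic — in fact meromorphic — operator-valued function on $\{\Im\pare{\lambda}>-\epsilon\}$ that agrees with $f_{\epsilon}\pare{H_{m}^{s,n}-\lambda}^{-1}f_{\epsilon}$ for $\Im\pare{\lambda}>0$.

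\emph{Step 1: meromorphic continuation of the kernel.} By Proposition \ref{propJost} the Jost solution $\psi$, and by Proposition \ref{BCS1} (together with its analogue for $2ml\geq1$) the solution $\varphi$ satisfying the boundary conditions, are analytic in $\lambda$ on $\{\Im\pare{\lambda}>-\frac{\kappa}{2}\}$, with the quantitative bounds stated there; the same holds for $\tilde{\psi}=\pare{-i}\gamma^{0}\gamma^{1}\gamma^{2}\psi$ and $\tilde{\varphi}=\pare{-i}\gamma^{0}\gamma^{1}\gamma^{2}\varphi$. Since the computation showing $\partial_{x}\alpha=\partial_{x}\beta=0$ only uses that $\varphi,\psi$ solve $H_{m}^{s,n}\cdot=\lambda\cdot$, the numbers $\alpha,\beta$ stay $x$-independent on the whole continuation region, and being bilinear combinations of the components of $\varphi,\psi$ at a fixed $x$ they are holomorphic on $\{\Im\pare{\lambda}>-\frac{\kappa}{2}\}$; hence so is $\det M_{\alpha,\beta}=\pare{\pare{\alpha-\beta}\pare{\alpha+\beta}}^{2}$. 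This half-plane is connected and, by the lemma showing that $M_{\alpha,\beta}$ is invertible, $\det M_{\alpha,\beta}$ does not vanish for $\Im\pare{\lambda}>0$ (absence of eigenvalues, \cite{IR14}); therefore its zero set $Z$ is discrete, $M_{\alpha,\beta}^{-1}$ is holomorphic off $Z$, and for each fixed $\pare{x,y}$ the map $\lambda\mapsto R_{m}^{s,n}\pare{x,y,\lambda}$ is meromorphic on $\{\Im\pare{\lambda}>-\frac{\kappa}{2}\}$ with poles (of finite order) contained in $Z$.

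\emph{Step 2: boundedness after conjugation.} Fix $0<\epsilon<\frac{\kappa}{2}$. The kernel of $f_{\epsilon}\pare{H_{m}^{s,n}-\lambda}^{-1}f_{\epsilon}$ is $e^{\epsilon x}R_{m}^{s,n}\pare{x,y,\lambda}e^{\epsilon y}$, which has exactly the form of $R_{m}^{s,n}$ with $\varphi,\psi,\tilde{\varphi},\tilde{\psi}$ replaced by $e^{\epsilon x}\varphi,e^{\epsilon x}\psi,e^{\epsilon x}\tilde{\varphi},e^{\epsilon x}\tilde{\psi}$. For $\Im\pare{\lambda}>-\epsilon$, the bound of Proposition \ref{propJost} gives $\norme{e^{\epsilon x}\psi\pare{x}}\leq C_{\lambda}\,e^{\pare{\epsilon+\Im\pare{\lambda}}x}$ near $-\infty$ with $\epsilon+\Im\pare{\lambda}>0$, so the weighted Jost solution decays exponentially at $-\infty$, while near $0$ the weight is harmless and the behaviour $O\pare{\pare{-x}^{-ml}}$ used in Lemma \ref{BDEDRES} is unchanged; Proposition \ref{BCS1} gives the analogous statements for $e^{\epsilon x}\varphi$. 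With these bounds the Cauchy–Schwarz / Schur estimates of Lemma \ref{BDEDRES} go through essentially verbatim: on $\{y<x\}$ one controls $\norme{e^{\epsilon x}\varphi\pare{x}}\int_{-\infty}^{x}\norme{e^{\epsilon y}\psi\pare{y}}dy$ and on $\{y>x\}$ one controls $\norme{e^{\epsilon x}\psi\pare{x}}\int_{x}^{0}\norme{e^{\epsilon y}\varphi\pare{y}}dy$, the exponents always recombining into $e^{cx}$ with $c\geq0$ on $]-\infty,0[$ precisely because $\epsilon>0$ and $\epsilon+\Im\pare{\lambda}>0$, the contributions near $0$ being exactly those already treated in Lemma \ref{BDEDRES} (separately for $2ml<1$ and $2ml\geq1$). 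Hence $f_{\epsilon}R_{m}^{s,n}\pare{\lambda}f_{\epsilon}$ is bounded on $\mathcal{H}_{s,n}$ for every $\lambda\in\{\Im\pare{\lambda}>-\epsilon\}\setminus Z$, with norm bounded locally uniformly in $\lambda$.

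\emph{Step 3: conclusion and main difficulty.} The matrix elements $\prods{f_{\epsilon}R_{m}^{s,n}\pare{\lambda}f_{\epsilon}u}{v}$ are holomorphic in $\lambda$ on $\{\Im\pare{\lambda}>-\epsilon\}\setminus Z$ — differentiation under the integral sign is licensed by the locally uniform bounds of Step 2, then apply Morera — and together with the local boundedness this makes $\lambda\mapsto f_{\epsilon}R_{m}^{s,n}\pare{\lambda}f_{\epsilon}$ a holomorphic family of bounded operators there; since the kernel is meromorphic across $Z$ with finite-order poles arising from $M_{\alpha,\beta}^{-1}$, so is this operator-valued function. By Theorem \ref{MainTheo} i) it coincides with $f_{\epsilon}\pare{H_{m}^{s,n}-\lambda}^{-1}f_{\epsilon}$ on $\{\Im\pare{\lambda}>0\}$, so it is the desired meromorphic extension, its poles (the resonances) lying in $Z$. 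I expect the main obstacle to be twofold: establishing that $Z$ is discrete, i.e. that $\det M_{\alpha,\beta}\not\equiv0$ on the continuation domain, which is exactly the point where the absence of $L^{2}$-eigenvalues of $H_{m}^{s,n}$ is used; and verifying that conjugation by $f_{\epsilon}$ restores, for $\Im\pare{\lambda}>-\epsilon$ and only there, the integrability at $-\infty$ that the Jost solution loses as soon as $\Im\pare{\lambda}\leq0$, so that the boundedness argument of Lemma \ref{BDEDRES} survives the extension.
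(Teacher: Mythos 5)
Your proposal is correct and follows essentially the same route as the paper: conjugate the explicit kernel by $f_{\epsilon}$, use the bounds on $\varphi$ and $\psi$ at $-\infty$ and at $0$ to rerun the Schur/Cauchy--Schwarz argument of Lemma \ref{BDEDRES} for $\Im\pare{\lambda}>-\epsilon$, and locate all possible singularities in the zeros of $\det M_{\alpha,\beta}=\pare{\pare{\alpha-\beta}\pare{\alpha+\beta}}^{2}$. Your Step 1 is in fact slightly more careful than the paper's proof, which does not explicitly record that $\det M_{\alpha,\beta}\not\equiv 0$ (via its non-vanishing for $\Im\pare{\lambda}>0$) is what makes its zero set discrete and the extension genuinely meromorphic.
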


\begin{proof}
Let $0<\epsilon < \frac{\kappa}{2}$. Recall the formula for the resolvent:
\begin{equation*}
R_{m}^{s,n}\pare{\lambda}g \pare{x} = \int_{-\infty}^{0} R_{m}^{s,n}\pare{x,y,\lambda}g\pare{y} dy.
\end{equation*}
for $g \in \mathcal{H}_{s,n}$ with
\begin{flalign*}
R_{m}^{s,n}\pare{x,y,\lambda} & = \pare{\varphi\pare{x} \psi^{t}\pare{y} + \tilde{\varphi}\pare{x} \tilde{\psi}^{t}\pare{y}} M_{\alpha,\beta}^{-1} i\Gamma^{1} \mathds{1}_{]-\infty,x[}\pare{y} \\
& \quad + \pare{\psi\pare{x}\varphi^{t}\pare{y} + \tilde{\psi}\pare{x} \tilde{\varphi}^{t}\pare{y}}M_{\alpha,\beta}^{-1} i\Gamma^{1}\mathds{1}_{]x,0[}\pare{y}
\end{flalign*}
where $M_{\alpha,\beta}$ is given in \eqref{Mabexpr}. Since our operator $H_{m}^{s,n}$ is self-adjoint on his domain, we know that this formula is well defined and analytic for $\Im\pare{\lambda}>0$. We will use this formula to extend $f_{\epsilon} \pare{H_{m}^{s,n} - \lambda}^{-1} f_{\epsilon}$ meromorphically accross the real axis for $\Im\pare{\lambda} > -\epsilon$. Indeed, we can write:
\begin{flalign*}
f_{\epsilon}\pare{x} \pare{\pare{H_{m}^{s,n} - \lambda}^{-1}f_{\epsilon}g} \pare{x} & =f_{\epsilon}\pare{x} \int_{-\infty}^{x} \pare{\varphi\pare{x} \psi^{t}\pare{y} + \tilde{\varphi}\pare{x} \tilde{\psi}^{t}\pare{y}}M_{\alpha,\beta}^{-1} i\Gamma^{1}f_{\epsilon}\pare{y}g\pare{y} dy \\
& \quad + f_{\epsilon}\pare{x}\int_{x}^{0}\pare{\psi\pare{x} \varphi^{t}\pare{y} + \tilde{\psi}\pare{x} \tilde{\varphi}^{t}\pare{y}}M_{\alpha,\beta}^{-1} i\Gamma^{1}f_{\epsilon}\pare{y}g\pare{y} dy.
\end{flalign*}
We will first see that we obtain this way a well-defined operator from $\mathcal{H}_{s,n}$ to $\mathcal{H}_{s,n}$ for $\Im\pare{\lambda} > -\epsilon$.
To this end, we recall that $\psi$ corresponds to the Jost solution while $\varphi$ corresponds to the solution satisfying the boundary condition. The behaviour of $\varphi$ is given by:
\begin{equation*}
\norme{\varphi\pare{x}}\leq 4 N \pare{-x}^{-ml} e^{-\frac{6 C_{\lambda,m} \pare{s+\frac{1}{2}}}{1-2ml} x}
\end{equation*}
for all $x\in ]-\infty,0[$, with $N = 2 \max \pare{{\abs{c},\abs{d}}}$, where $c = \underset{x\to 0}{\lim} \frac{1}{2} \pare{-x}^{ml}\pare{\varphi_{1}-\varphi_{3}}$,\\
$d = \underset{x\to 0}{\lim} \frac{1}{2} \pare{-x}^{ml}\pare{\varphi_{2}+\varphi_{4}}$, and $C_{\lambda,m} = \underset{x\in ]-\infty,0[}{\max} \pare{\abs{\lambda},\abs{m\pare{B\pare{x}+\frac{l}{x}}},\abs{A\pare{x}}}$. When $x \to -\infty$, we have the estimate:
\begin{equation*}
\norme{\varphi \pare{x}} \leq C e^{-\abs{\Im\pare{\lambda}}x}
\end{equation*}
using the same argument as in the proof of proposition \ref{BCS1}. For the Jost solution, using a similar argument as in lemma \ref{BDEDRES}, we see that, near $0$, $\psi$ has the following behaviour:
\begin{equation*}
\norme{\psi\pare{x}} \leq C \pare{-x}^{-ml}
\end{equation*}
which is not singular with respect to $\lambda$. The behaviour of this solution at $-\infty$ is:
\begin{equation*}
\psi \pare{x} = \begin{pmatrix} 0 \\
0 \\
e^{-i\lambda x} \\
0
\end{pmatrix} + o\pare{e^{i\lambda x}}
\end{equation*}
for $0>\Im\pare{\lambda} > -\epsilon$. Since $f_{\epsilon}$ is bounded near $0$, $f_{\epsilon} \varphi$ and $f_{\epsilon} \psi$ behave the same way as $\varphi$ and $\psi$ near $0$. At $-\infty$, $f_{\epsilon} \varphi$ then decay like $e^{\pare{\epsilon - \abs{\Im\pare{\lambda}}}x}$ and while $f_{\epsilon} \psi$ decay like $e^{\pare{\epsilon + \Im\pare{\lambda}}x}$ which ensures the integrability at $-\infty$. This is of course the same for $f_{\epsilon} \tilde{\phi}$ and $f_{\epsilon} \tilde{\psi}$. In the proof of lemma \ref{BDEDRES}, the essential part is the behavior at $-\infty$ and $0$ of $\varphi$ and $\psi$ for $\Im\pare{\lambda}>0$. Since the behavior of $f_{\epsilon} \varphi$ and $f_{\epsilon} \psi$ for $\Im\pare{\lambda} > -\epsilon$ is similar to the behaviour of $\varphi$ and $\psi$ for $\Im\pare{\lambda}>0$, we can use a similar argument as in the proof of lemma \ref{BDEDRES} to obtain the boundedness of $f_{\epsilon} \pare{H_{m}^{s,n} - \lambda}^{-1} f_{\epsilon}$. \\
Now that our operator $f_{\epsilon} \pare{H_{m}^{s,n} - \lambda}^{-1} f_{\epsilon}$ has been extended to the set of $\lambda$ such that $\Im\pare{\lambda} > - \epsilon$, we wish to analyze the analyticity property of this extension. For that, we first remark that, by our construction of $\varphi$ and $\psi$, we know that these functions (and then $\tilde{\varphi}$ and $\tilde{\psi}$) are analytic for $\Im\pare{\lambda} > -\epsilon$. Thus the integral term is analytic. Unfortunately, the matrix $M_{\alpha,\beta}^{-1}$ may have some singularities. These singularities will come from the inverse of the determinant $\det \pare{M_{\alpha,\beta}}  = \pare{\pare{\alpha-\beta}\pare{\alpha +\beta}}^{2}$ which is the inverse of an holomorphic function in $\Im\pare{\lambda} > - \epsilon$. Hence, we have obtained a meromorphic extension of $f_{\epsilon} \pare{H_{m}^{s,n} - \lambda}^{-1} f_{\epsilon}$ for $\Im\pare{\lambda} > -\epsilon$.

\end{proof}

\bibliographystyle{plain-fr}
\bibliography{BiblioResolventFormula}
\nocite{*}

\end{document}